\DeclareMathOperator*{\argmax}{argmax}
\theoremstyle{remark}
\newtheorem*{remark}{Remark}
\newtheorem{proposition}{Proposition}[section]
\begin{document}

\title[Ridge detection and WSF]{Ridge detection for nonstationary multicomponent signals with time-varying wave-shape functions and its applications}

\author{Yan-Wei Su}
\address{Department of Applied Mathematics, National Yang Ming Chiao Tung University, Hsinchu, Taiwan}
\email{su311652001.sc11@nycu.edu.tw}

\author{Gi-Ren Liu}
\address{Department of Mathematics, National Cheng-Kung University, Tainan, Taiwan and National Center for Theoretical Sciences, National Taiwan University, Taipei, Taiwan}
\email{girenliu@gmail.com}

\author{Yuan-Chung Sheu}
\address{Department of Applied Mathematics, National Yang Ming Chiao Tung University, Hsinchu, Taiwan}
\email{sheu@math.nctu.edu.tw}

\author{Hau-Tieng Wu}
\address{Courant Institute of Mathematical Sciences, New York University, New York, NY, 10012 USA}
\email{hauwu@cims.nyu.edu}

\maketitle

\begin{abstract}
We introduce a novel ridge detection algorithm for time-frequency (TF) analysis, particularly tailored for intricate nonstationary time series encompassing multiple non-sinusoidal oscillatory components. The algorithm is rooted in the distinctive geometric patterns that emerge in the TF domain due to such non-sinusoidal oscillations. We term this method \textit{shape-adaptive mode decomposition-based multiple harmonic ridge detection} (\textsf{SAMD-MHRD}). A swift implementation is available when supplementary information is at hand. We demonstrate the practical utility of \textsf{SAMD-MHRD} through its application to a real-world challenge. We employ it to devise a cutting-edge walking activity detection algorithm, leveraging accelerometer signals from an inertial measurement unit across diverse body locations of a moving subject.
\end{abstract}

\section{Introduction}

Applying time-frequency (TF) analysis \cite{flandrin1998time} to study nonstationary time series with multiple oscillatory components has gained significant attention. Unlike traditional time or frequency domain analysis in time series \cite{brockwell2009time}, TF analysis follows a 'divide-and-conquer' approach. By assuming local signal stationarity, we extract and combine information to create the TF representation (TFR)\footnote{Other domains like time-scale, ambiguity, and time-lag are beyond our scope here.}. TFR is a function in the TF domain, a two-dimensional space with time on the x-axis and frequency on the y-axis.

Guided by this concept, various TF analysis algorithms emerged, ranging from linear methods like short-time Fourier transform (STFT) to nonlinear approaches like synchrosqueezing transform (SST) and its variations \cite{wu2020current}, along with others \cite{flandrin1998time}. An effective TF analysis algorithm accurately captures signal components across frequencies and times in the TF domain \cite{flandrin1998time}, enhancing encoded information use. The TFR, an extension of the Fourier transform, serves this purpose. Researchers manipulate the TFR to achieve tasks like signal decomposition, feature extraction, and denoising after obtaining it. Fig. \ref{overall flowchart} illustrates the signal processing flowchart. This paper focuses on TFR determined by STFT and SST to avoid distracting from the array of TF analysis algorithms.

An essential TFR manipulation step is {\em ridge detection} (RD), involving identifying ridges on the TFR that represent dominant signal traits.\footnote{RD can be discussed more generally \cite{hall1992ridge,steger1998unbiased}, where ridges are defined as curves on a surface. In this paper, we focus on TF analysis, where ridges are time-parametrized \cite[Section III]{carmona1999multiridge}.} Mathematically, a ridge is a sequence of local maxima of the TFR along the time-indexed frequency axis, often visualized as a curve on the TF domain. Under conditions like the adaptive harmonic model \cite{DaLuWu2011}, capturing multiple oscillatory components with slowly varying amplitude and frequency, ridges correspond to instantaneous frequencies (IF) of different components \cite{delprat1992asymptotic}, as shown in Fig. \ref{overall flowchart}. Ridge information enables tasks like component reconstruction \cite{DaLuWu2011} for signal decomposition \cite{colominas2021decomposing} and denoising \cite{Chen_Cheng_Wu:2014}. Further, phase and IF of each component can inform feature design for machine learning \cite{alian2022reconsider,alian2023amplitude}.

While the TF analysis-based signal processing flow in Fig. \ref{overall flowchart} has been utilized for real-world problems extensively, the pivotal step, RD, remains challenging. Literature has witnessed various endeavors to create efficient RD algorithms, encompassing path optimization with regularity constraints \cite{CHTridge1997,Chen_Cheng_Wu:2014}, its frequency modulation extension \cite{Iatsenko_McClintock_Stefanovska_2016,Colominas_Meignen_Pham_2020,Laurent_Meignen_2021}, Markov chain Monte Carlo (MCMC) \cite{carmona1999multiridge}, weighted ridge reconstruction (WRR) and self-paced ridge reconstruction (SPRR) \cite{Zhu_Zhang_Gao_Li_2019}, and a pseudo-Bayesian approach \cite{Legros_Fourer_2021}. TFR preprocessing via singular value decomposition (SVD) \cite{Ozkurt_Savaci_2005} or TF domain division using the reassignment vector \cite{Meignen_Gardner_Oberlin_2015,Laurent_Meignen_2021} before ridge determination is also viable. Noise impact is commonly mitigated via thresholding. Most RD algorithms, except \cite{carmona1999multiridge}, detect one ridge at a time from the entire TF domain or a portion, obtaining all ridges iteratively, often using a {\em peeling scheme}.

Despite successful applications, these algorithms have persistent limitations. All are founded on the assumption of sinusoidal oscillations and often require well-separated IFs; that is, no mode mixing. This assumption is challenged when oscillatory patterns lack sinusoidal traits, yielding multiple harmonics in the TFR. Refer to Fig. \ref{overall flowchart}(b)(e) for an instance involving a non-sinusoidal oscillation depicted in Fig. \ref{overall flowchart}(a) using second-order SST \cite{oberlin2015second}. This pattern is termed the {\em wave-shape function} (WSF) \cite{Wu:2013}. Real-world instances, particularly in biomedicine like the wrist accelerometer signal in Fig. \ref{fig2}, exhibit complex non-sinusoidal WSFs. Even with well-separated IFs of fundamental components, overlapping IFs of their harmonics can arise. See an example in Fig. \ref{two components demo}. This challenge, especially relevant to digital health, where diverse biomedical signals with intricate WSFs are prevalent, challenges the applicability of existing RD algorithms. Thus, a tailored RD algorithm is urgently needed.

In this paper, we introduce an innovative RD algorithm tailored for non-sinusoidal WSFs' influence on the TFR's geometric structure, and show its outperformance over existing methods. The algorithm's foundation rests on a fundamental theoretical insight. When WSFs exhibit non-sinusoidal traits, STFT and SST generate a unique geometric pattern in the TFR. This pattern, recurrent along the frequency axis at a fixed time point, is evident in Fig. \ref{overall flowchart}(b){(c)}, denoted by red arrows. Our approach involves fitting {\em multiple ridges} simultaneously to the TFR respecting this structure. When there are multiple oscillatory components, {\em shape-adaptive mode decomposition} (SAMD) \cite{colominas2021decomposing} is applied to replace the commonly applied peeling scheme.
We term it \textit{SAMD-based multiple harmonic RD} (\textsf{SAMD-MHRD}). Incorporating this geometric structure is akin to accessing more ridge information, akin to the principles in cepstrum \cite{oppenheim2004frequency} and de-shape STFT \cite{lin2016waveshape}. Fig. \ref{overall flowchart}(g) depicts \textsf{SAMD-MHRD} yielding four harmonic fits. \textsf{SAMD-MHRD} not only offers an accurate RD, but also enhances SAMD's decomposition capabilities. As a practical application, it aids accurate walking activity detection via accelerometers placed on various body areas, particularly the wrist.

\begin{figure*}[hbt!]
\centering
\includegraphics[trim=0 90 0 70,clip,width=1\textwidth]{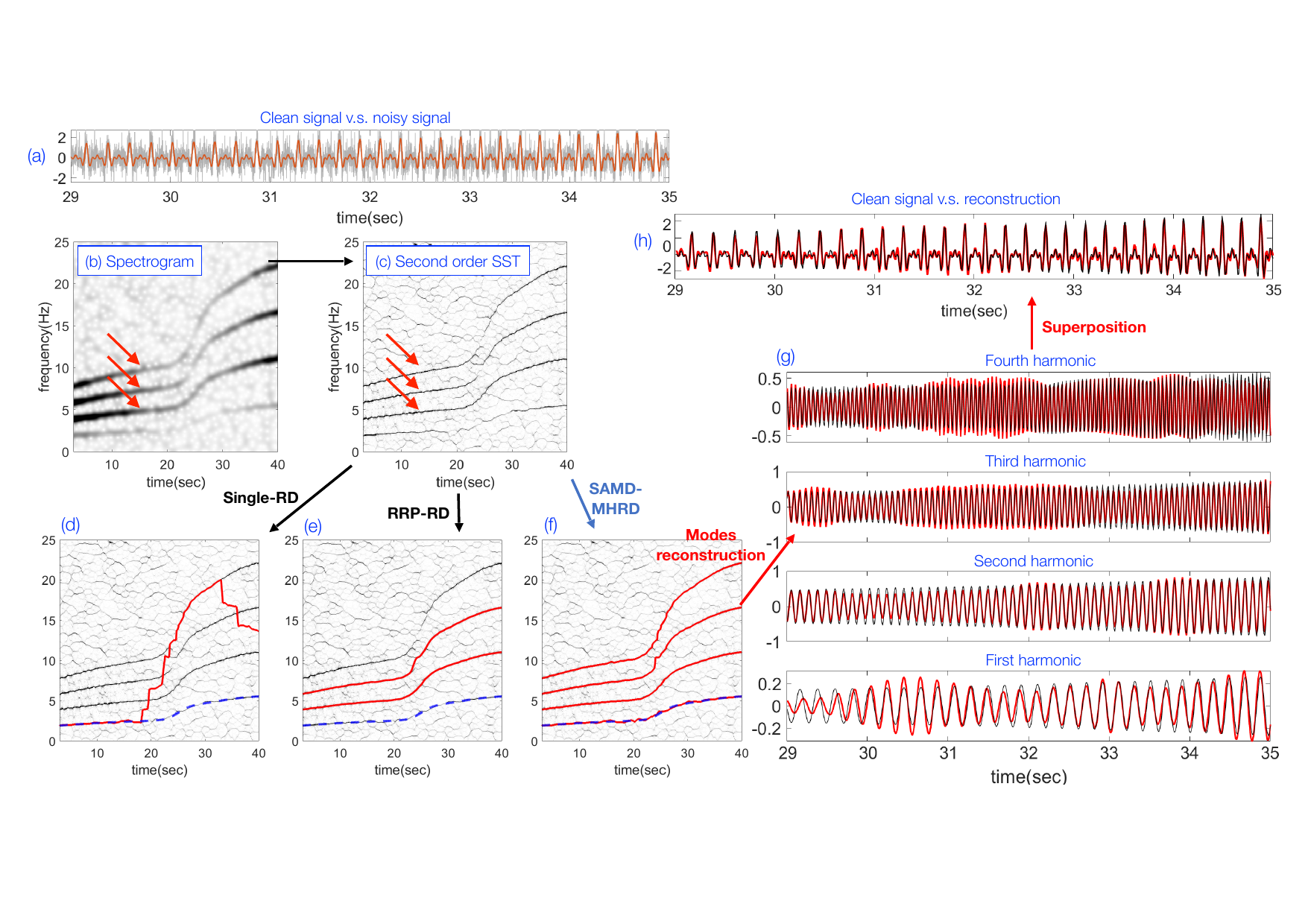}
\caption{The overall flowchart of analyzing nonstationary oscillatory time series and the challenge of RD.
(a) A portion of the simulated noisy signal $Y(t)$ with the signal-to-noise level of 5 dB is shown as the gray curve, with the clean signal superimposed as the red curve. The clean signal has one intrinsic mode type (IMT) function, whose fundamental component is weak. See \eqref{Model:equation2} for the precise definition.
(b) The spectrogram of $Y(t)$.
(c) The second order SST.
(d) The extracted lowest-frequency ridge from the TFR shown in (c) by the existing curve extraction algorithm, \textsf{Single-RD} \eqref{singleCurveExt:theory}, is superimposed as the red curve. The extracted ridge is biased by the ridges of the harmonics after the 19th second. The true instantaneous frequency (IF) is superimposed as the blue-dashed line.
(e) Same as (d), but by another existing RD algorithm, RRP-RD.
(f) Same as (d), but by our proposed curve extraction algorithm \textsf{SAMD-MHRD}.
(g) The decomposed harmonics of the IMT function are shown in red, and the true harmonics are shown in gray. Note that from the 29th to the 31st second, the fundamental component is less accurately recovered due to the low signal-to-noise ratio. It is the same for the fourth harmonic. 
(h) The summation of the decomposed harmonics is shown in red, and the true IMT function is shown in black.
\label{overall flowchart}}
\end{figure*}

The paper's structure is as follows: Section \ref{section model} introduces the mathematical model for nonstationary multicomponent signals with time-varying WSF. This section also summarizes the application of TF analysis tools in signal processing, discusses RD as a pivotal analysis step, and highlights existing challenges. Section \ref{section multiple curve ext} elaborates on the proposed \textsf{SAMD-MHRD} algorithm. Numerical examples are presented in Section \ref{section numerics}, while the real-world application is demonstrated in Section \ref{section real signals app}. Finally, Section \ref{section discussion conclusion} contains the discussion and conclusion.

\section{Mathematical models and analysis tools}\label{section model}

We begin by examining the mathematical model for non-stationary multicomponent signals with time-varying WSFs. Following that, we provide brief overviews of STFT and SST, illustrating their application in diverse signal processing tasks. Subsequently, we identify the constraints of current RD algorithms, which serve as the inspiration for this study.

\subsection{Mathematical model}
We consider the {\em adaptive non-harmonic model} (ANHM) to model a non-sinusoidal oscillatory signal.  
Fix a small constant $\epsilon>0$ and $L\in \mathbb{N}$. 
Assume the signal satisfies
\begin{equation}
\label{Model:equation}
Y_0(t) = \sum_{\ell=1}^L a_{\ell}(t)s_{\ell}(\phi_{\ell}(t))+T(t)+\Phi(t)\,,
\end{equation}
where $a_{\ell}(t)s_{\ell}(\phi_{\ell}(t))$ is called the $\ell$-th {\em intrinsic model type} (IMT) function that satisfies
\begin{itemize}
\item[(C1)] $\phi_{\ell}(t)$ is a $C^2$ function called the {\em phase function} of the $\ell$-th IMT function. We assume $\phi'_\ell>0$;

\item[(C2)] $\phi_{\ell}'(t)$ is the instantaneous frequency (IF)  of the $\ell$-th IMT function 
so that $|\phi''_\ell(t)|\leq \epsilon \phi'_\ell(t)$ for all $t\in \mathbb{R}$ and $\inf_{t\in \mathbb{R}} \phi_1'(t)>\Delta$ for some $\Delta>0$. When $L>1$, we assume $\phi_{j+1}'(t)-\phi_j'(t)>\Delta>0$ for all $j=1,\ldots,L-1$ and $t\in \mathbb{R}$; 

\item[(C3)] $a_{\ell}(t)>0$ is a $C^1$ function denoting the {\em amplitude modulation (AM)} of the $\ell$-th IMT function so that $|a_\ell'(t)|\leq \epsilon \phi'_\ell(t)$ for all $t\in \mathbb{R}$;  

\item[(C4)] $s_{\ell}$ is a smooth $1$-period function with mean 0 and unit $L^2$ norm so that $|\hat{s}_\ell(1)|>0$, which is called the {\em wave-shape function} (WSF) of the $\ell$-th IMT function; 

\item[(C5)] $\Phi(\cdot)$ is a mean $0$ stationary random process in the wide sense with finite variance;

\item[(C6)] $T(t)$ is a smooth function so that {its Fourier transform, $\widehat T$,} is compactly supported in $[-\Delta,\Delta]$. 

\end{itemize}

The model's identifiability is discussed in \cite{Chen_Cheng_Wu:2014}. A typical example adhering to \eqref{Model:equation} is the accelerometer signal, with $L=1$, shown in Fig. \ref{fig2}. Clearly, the IMT function does not oscillate sinusoidally.
This model assumes {\em one} WSF for each IMT function. In essence, the $\ell$-th IMT function arises from stretching $s_\ell$ by $\phi_\ell$, subsequently scaled by $a_\ell$. Nevertheless, in practical instances, researchers have observed WSFs not to be fixed \cite{alian2023amplitude,eid2023using,wang2023arterial}. See Fig. \ref{fig2} for an example. It is evident that the oscillatory pattern varies over time. Notably, both STFT and SST reveal that the strength of the fundamental component (indicated by the red arrow) weakens, while the third, sixth, and seventh harmonics (indicated by purple arrows) strengthen after 225 seconds. As a result, \eqref{Model:equation} lacks appropriateness as a model.

\begin{figure}[htb!]
\centering
\includegraphics[trim=50 10 0 0, width=0.85\textwidth]{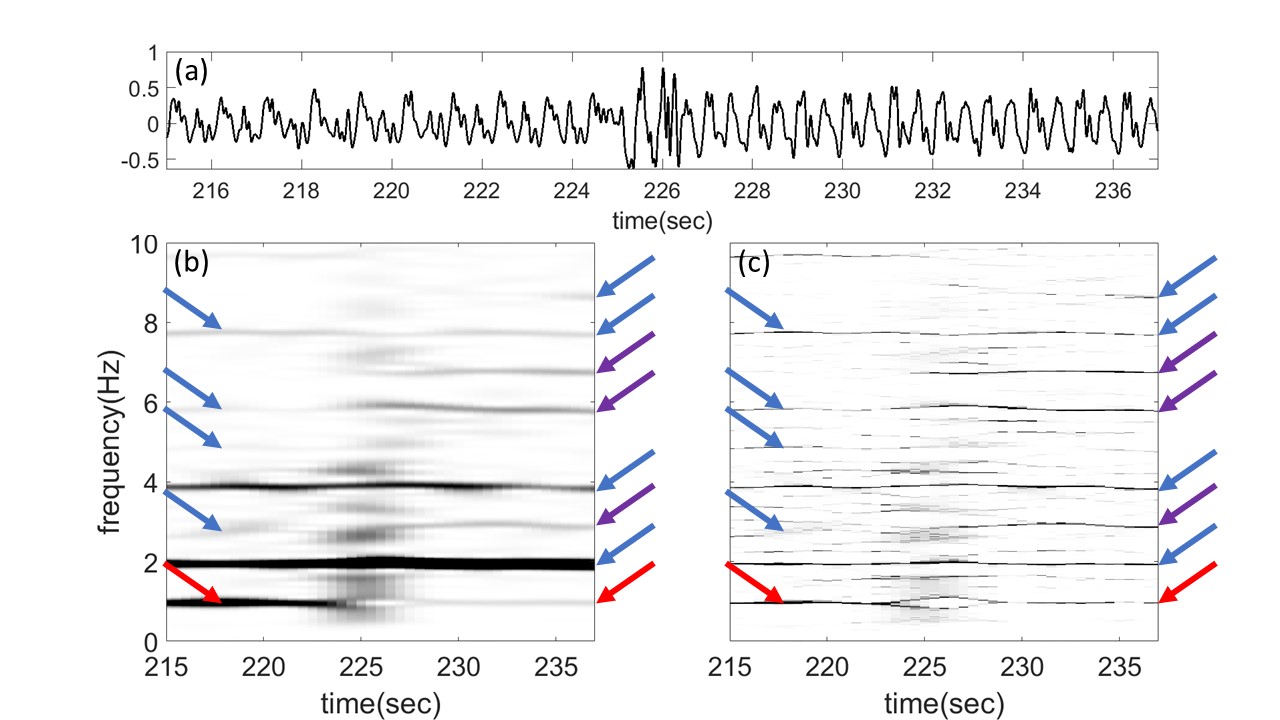}
\caption{Top row: an accelerometer signal. Bottom left: the spectrogram of the accelerometer signal. Bottom right: the SST of the accelerometer signal.  The fundamental component is indicated by the red arrow, and the harmonics are indicated by the blue arrows. \label{fig2}}
\end{figure}

To properly model such signals, we consider the generalization of \eqref{Model:equation} in \cite{lin2016waveshape}.
To motivate this generalization, assume $L=1$ to simplify the discussion. We have
\begin{align}
a_{1}(t)s_{1}(\phi_{1}(t))\label{Expansion f Fourier series}= \sum_{j=1}^\infty (\alpha_ja_1(t))\cos(2\pi j\phi_1(t)+\beta_j)\,,
\end{align}
where $\{\alpha_j\}\subset [0,\infty)$ and $\{\beta_j\}\subset[0,2\pi)$ are determined by Fourier coefficients of $s_1$. 
The generalization idea in \cite{lin2016waveshape} involves permitting time-varying characteristics for each harmonic in \eqref{Expansion f Fourier series}, given specific conditions. Thus, consider
\begin{equation}
\label{Model:equation2}
Y(t) = \sum_{\ell=1}^L \sum_{j=1}^{D_\ell} a_{\ell,j}(t)\cos(2\pi\phi_{\ell,j}(t))+T(t)+\Phi(t)\,,
\end{equation} 
where $D_\ell\in \mathbb{N}$ is called the {\em harmonic order}, and
%
%
%
%
\begin{enumerate}
\item[(C7)] $\phi_{\ell,1}$ behaves like $\phi_{\ell}$ in (C1)-(C2), $\phi_{\ell,j}\in C^2$, and $|j-\phi'_{\ell,j}(t)/\phi'_{\ell,1}(t)|\leq \epsilon'$ for $j=2,\ldots$ and a small constant $\epsilon'\geq 0$  for all $t\in \mathbb{R}$; 
\item[(C8)] $a_{\ell,1}(t)$ behaves like $a_{\ell}$ in (C3), $a_{\ell,j}\in C^1$ and $|a'_{\ell,l}(t)| \leq \epsilon' \phi'_{\ell,1}(t)$ for all $t\in \mathbb{R}$. 
\end{enumerate}
Conditions (C7) and (C8) capture the time-varying WSF, assuming slow variation. This model is a simplified version of that in \cite{lin2016waveshape} to facilitate discussion. For $j\geq 1$, $a_{\ell,j}(t)\cos(2\pi \phi_{\ell,j}(t))$ is called the {\em $j$-th harmonic} of the $\ell$-th IMT function, and $a_{\ell,1}(t)\cos(2\pi \phi_{\ell,1}(t))$ is the {\em fundamental component}. When $D_\ell=1$, the $\ell$-th IMT function oscillates sinusoidally. In this work, we assume the knowledge of $L$ and apply \cite{ruiz2022wave} to estimate $D_\ell$ to avoid the distraction. {Although there are tools when $D_\ell=1$ for all $\ell$ \cite{SucicSaulig2011,SauligPustelnik2013,Laurent_Meignen_2021,ruiz2022wave}, estimating $L$ in general is challenging. The slowly varying IF condition (C2) can be extended to a fast varying one \cite{kowalski2018convex}, which requires more technical details. To stay focused, we will discuss this generalization in future work.}

\subsection{Analysis by TF analysis tools---a quick summary}\label{section summary tfa}

Due to the non-stationary nature of ANHM, conventional time series methods may be limited. A common strategy to analyze such signals involves employing TF analysis algorithms \cite{flandrin1998time}. The core notion behind TF analysis is a ``divide-and-conquer'' approach. Locally assuming a ``stable'', ``time-invariant'', or ``stationary'' structure, we estimate this structure accurately in patches. By combining local estimates, we construct a TFR, which enables solving signal processing tasks like AM and IF estimation of each IMT function, signal decomposition into essential components (IMT functions and their harmonics), denoising, and dynamic feature extraction. This paper concentrates on STFT and SST, and we refer readers to \cite{flandrin1998time} for other TF analysis algorithms.

Below, we explain the overall idea. Mathematically, for a function $f$ satisfying mild conditions (e.g., a tempered distribution), the STFT is defined as
\begin{equation}
V^{(h)}_f(t,\xi):=\int f(x)h(x-t)e^{-i2\pi\xi(x-t)}dx\,,
\end{equation}
where $t\in \mathbb{R}$ is the time, {$\xi\in \mathbb{R}$} is the frequency, and $h$ is a chosen window that is smooth and decays sufficiently fast (e.g., the Gaussian function). The spectrogram is $|V^{(h)}_f(t,\xi)|^2$, yielding a function on {$\mathbb{R}^2$}. When $L>1$, the essential support of $\hat{h}$ should reside in $[-\Delta/2,\Delta/2]$ or the spectral interference of different IMT functions might happen.  
Generally, any TF analysis method's output is a function on $\mathbb{R}\times \mathbb{R}^+$, known as the TFR. We call $\mathbb{R}\times \mathbb{R}^+$ the {\em TF domain}. See Fig. \ref{overall flowchart}(b) for an example, where the input signal follows ANHM with $L=1$, where the fundamental component is weaker compared to its second harmonic and diminishes over time, as evident in the spectrogram. 
Nonetheless, STFT encounters the uncertainty principle \cite{ricaud2014survey}; that is, the TFR appears 'blurred' due to the chosen window. 
To address this, SST \cite{DaLuWu2011}, a reassignment algorithm  \cite{auger1995improving} variant, and its generalizations like second-order SST \cite{oberlin2015second} were introduced. They utilize the phase information within the complex-valued TFR obtained through STFT to {sharpen} the STFT-based TFR. Let $S_f^{(h,[1])}:\mathbb{R}\times \mathbb{R}^+\to \mathbb{C}$ and $S_f^{(h,[2])}:\mathbb{R}\times \mathbb{R}^+\to \mathbb{C}$ denote the original SST \cite{DaLuWu2011} and second-order SST \cite{oberlin2015second} TFRs of function $f$, respectively.

To proceed, the conventional procedure entails extracting {\em ridges} connected to harmonic IFs. In contrast to STFT, a sharper TFR, as determined by SST, can enhance ridge estimation accuracy \cite{Meignen2017}. It has been theoretically established that when a signal adheres to ANHM with sinusoidal WSFs, the ridges closely approximate IFs of harmonics of all IMT functions \cite{delprat1992asymptotic,DaLuWu2011}.
Once ridges are accurately extracted, IMTs can be reconstructed through a reconstruction formula. Specifically, suppose the extracted ridge $c(t)$ approximates IF of $a_{\ell,j}(t)\cos(2\pi\phi_{\ell,j}(t))$ in \eqref{Model:equation2} for some $\ell$ and $j$. Then, if $\phi_{\ell,j}'(t)$ {\em does not intersect with and is away from IF of any other harmonics}, the following formula holds:
\begin{equation}
    \int_{|\xi-c(t)|<\Delta_q}S_f^{(h,[q])}(t,\xi)d\xi \approx a_{\ell,j}(t)e^{ 2\pi i\phi_{\ell,j}(t)}\,,
    \label{reconFormula}
\end{equation}
where $q=1,2$ and user-defined bandwidth $\Delta_q>0$. This formula yields { estimates of} $a_{\ell,j}(t)$ through absolute value extraction and $\phi_{\ell,j}(t)$ through phase unwrapping \cite{alian2022reconsider}. With these {estimates}, further signal processing tasks become feasible. Regrettably, the condition that $\phi_{\ell,j}'(t)$ must not intersect with or approach the IF of other {IMT function's} harmonics, as required by \eqref{reconFormula}, is excessively stringent and practically infeasible. Thus, an alternative approach becomes necessary. Various mode retrieval algorithms for STFT exist, such as ridge-based skeleton \cite{CHTridge1997}, integration approach \cite{Laurent_Meignen_2021}, basin attractor utilization \cite{Meignen_Gardner_Oberlin_2015,meignen2016adaptive,Laurent_Meignen_2021}, penalization with detected ridges \cite{carmona1999multiridge}, and others. However, these methods face similar limitations posed by non-sinusoidal WSFs.

\begin{remark}
It is worth noting that the phase can be estimated by cumulatively summing the estimated IF. However, this method is suboptimal due to accumulating discretization errors and the need for a separate initial phase estimation. This approach is only advisable when \eqref{reconFormula} or other reconstruction formulas cannot be applied.
\end{remark}

See Section \ref{section numerical implementation of TFA} for a discussion of the numerical implementation of STFT and SST. Below, we denote the discretized TFR of $f$, either determined by STFT or SST, as $\mathbf{R}\in \mathbb{C}^{N\times M}$, where $N$ is the number of the sampling points of the signal with the sampling period $\Delta_t$, and $M$ is the number of the frequency domain ticks, each bin spaced by $\frac{1}{2M\Delta_t}$.

\subsection{Ridge detection as a key step---existing algorithms}
The key step in the signal processing process outlined in Section \ref{section summary tfa} involves an accurate RD algorithm. 
RD algorithms can be broadly categorized into three groups. One involves detecting ridges individually and gathering all ridges through an iterative {\em peeling scheme} \cite{CHTridge1997,Chen_Cheng_Wu:2014,Colominas_Meignen_Pham_2020}. The second is the {\em multiridge scheme}, which detects multiple ridges concurrently \cite{carmona1999multiridge}. The third is the {\em preprocessing scheme}, where the TF domain is segmented to include one ridge per portion or the TFR is enhanced before applying either peeling or multiridge algorithms \cite{Laurent_Meignen_2021}. Given that our proposed algorithm adheres to the peeling scheme and the focus is handling WSFs, our focus remains on reviewing algorithms within this category.

In the peeling scheme, once we obtain a ridge $c_1:[N]\rightarrow[M]$, where $[N]:=\{1,\cdots,N\}$, from $\mathbf{R}_1:=\mathbf{R}$, we set iteratively the following masked TFR, $\ell=2,\ldots,L+1$:
\begin{align}
\label{peeling scheme formula}
\mathbf{R}_\ell(n,m)=\left\{
\begin{array}{ll}
0 & \mbox{if }(n,m)\in B_{\ell-1,n}\\
\mathbf{R}_{\ell-1}(n,m) &\mbox{otherwise}\,.
\end{array}
\right.\,,
\end{align}
where $B_{\ell-1,n}:=[c_{\ell-1}(n)-\eta_-(n),c_{\ell-1}(n)+\eta_+(n)]$, $c_{\ell-1}$ is the ridge extracted from $\mathbf{R}_{\ell-1}$, and $\eta_-(n)\geq0$ and $\eta_+(n)\geq0$ is the bandwidth selected by the user. Note that we assume the knowledge of $L$ in this work. 
Practically, the selection of $\eta_-(n)$ and $\eta_+(n)$ depends on the TFR and profoundly impacts RD performance.  Several methods exist for masking the TFR. For instance, one can opt for a constant frequency bandwidth (CFB) approach, where $\eta_-(n)$ and $\eta_+(n)$ remain constant for all $n$ \cite{Colominas_Meignen_Pham_2020}. Alternatively, the bandwidth can adapt to noise levels, known as the varying frequency bandwidth (VFB) \cite{Colominas_Meignen_Pham_2020}. Another option involves the modulation-based (MB) approach, which relies on estimating spectral spreading due to chirp \cite{Colominas_Meignen_Pham_2020}.

Numerous algorithms exist for fitting a ridge to the (masked) TFR $\mathbf{R}_\ell$ in each iteration. These algorithms share the common goal of approximating a curve within the TFR to capture maximum energy while meeting specific conditions. A frequently used method is to solve the following path optimization problem \cite{CHTridge1997,Chen_Cheng_Wu:2014}:
\begin{equation}
c^*=\argmax_{c:[N]\rightarrow[M]}
\sum_{j=1}^N
\big|\widetilde{\mathbf{R}}(j,c(j))
\big|
-
\lambda_1\sum_{j=1}^{N-1}\left|\Delta c(j)\right|^2\,,
    \label{singleCurveExt:theory}
\end{equation}
where $\Delta c\in [M]^{N-1}:=\{(m_1,\cdots,m_{N-1}):m_k\in[M],\,1\leq k\leq N-1\}$ so that $\Delta c(j):=c(j+1)-c(j)$ for $j=1,\ldots,N-1$, which is the numerical differentiation of the curve $c$, $\lambda_1>0$ is the penalty term constraining the regularity of the fit curve $c^*$, and $\widetilde{\mathbf{R}}(\ell,q)=\log\frac{|\mathbf{R}(\ell,q)|}{\sum_{i=1}^N\sum_{j=1}^M|\mathbf{R}(i,j)|}$ is a normalization of the matrix $\mathbf{R}$. 
In practice, various approaches to solving the optimization problem (\ref{singleCurveExt:theory}) exist, including the simulated annealing algorithm \cite{CHTridge1997}. However, we recommend the more efficient penalized forward-backward greedy algorithm, referred to as the \textit{single curve extraction algorithm} (\textsf{Single-RD}) \cite{CHTridge1997,Chen_Cheng_Wu:2014}. Note that we could further enhance regularity by considering second-order numerical differentiation in \eqref{singleCurveExt:theory}, or even incorporating available noise information \cite{CHTridge1997}. For simplicity, we omit these aspects from the current discussion.

Several other algorithms proposed in \cite{Iatsenko_McClintock_Stefanovska_2016,Colominas_Meignen_Pham_2020,Laurent_Meignen_2021} could be viewed as solving variations of \eqref{singleCurveExt:theory}. For example, the frequency modulation (FM) approach proposed in  \cite{Colominas_Meignen_Pham_2020} solves
$c^*=\argmax_{c:[N]\rightarrow[M]}
\sum_{j=1}^N
\left|\mathbf{R}(j,c(j))
\right|^2$ 
such that $|\Delta c(j)-\frac{K}{N^2}\hat{q}(j,c(j))|\leq C$ for some $C\geq 0$, where $\hat{q}$ is the chirp rate estimate \cite[eq. (6)]{Colominas_Meignen_Pham_2020}. This optimization problem could be solved by, for example, the recurring principle \cite[eq. (11)]{Laurent_Meignen_2021}, leading to the \textsf{FM-RD} algorithm. In \cite{Laurent_Meignen_2021}, to handle substantial noise, the concept of a relevant ridge portion (RRP) is introduced, giving rise to the \textsf{RRP-RD} algorithm. RRPs correspond to the desired ridges and are employed to segment the TF domain into specific structures known as basins of attraction \cite{meignen2016adaptive}. The spline least-square approximation is applied to RRPs for RD.

As an example, consider the simplest scenario with only one IMT function ($L=1$ in \eqref{Model:equation2}) that oscillates sinusoidally ($a_{1,j}=0$ for all $j\geq 2$). In this case, the ridge $c^*$ determined by any of these RD algorithms provides an accurate estimate of the associated fundamental component's IF \cite{DaLuWu2011}, followed by other signal processing processes. Nonetheless, this simplest scenario is rarely encountered in practice.

\subsection{Challenges of existing approaches}

To illustrate the challenges posed by existing RD algorithms, consider scenarios closer to real-world situations. First, when $L>1$ in \eqref{Model:equation2} and all IMT functions oscillate sinusoidally, we can apply any of the aforementioned RD algorithms to estimate the IFs associated with all IMT functions, assuming the IF separation condition in (C2) holds. Second, when $L=1$ in \eqref{Model:equation2} but the WSF significantly deviates from being sinusoidal, we can apply the above RD algorithms based on (C7) to estimate all ridges linked to the harmonics. The ridge associated with the lowest IF can then be assumed as the fundamental component's IF. In these more realistic scenarios, successfully applying existing RD algorithms becomes more challenging. An example in Fig. \ref{overall flowchart}(f) demonstrates this complexity, where the strength of the fundamental component is weak in the middle while the strength of the second harmonic dominates. Such shifts in strengths can easily confuse existing RD algorithms.

In real-world scenarios, more intricate situations often challenge the aforementioned RD algorithms. When $L>1$ in \eqref{Model:equation2} and the WSFs deviate significantly from sinusoidal patterns, the TFR can become {complex} with multiple curves {representing} different harmonics of distinct IMT functions. This complexity involves interference {among} harmonics from different IMT functions and can lead to mode mixing. For instance, as shown in Fig. \ref{two components demo}, two IMT functions exhibit non-sinusoidal WSFs. Here, the second harmonic of the first IMT function (red arrows) intersects with the fundamental component of the second IMT function (blue arrows), making it challenging to accurately determine the IF of the fundamental component for each IMT function. 
Given that numerous signal processing tasks, such as SAMD \cite{colominas2021decomposing}, rely on {accurately} acquiring the phase of each IMT function and considering the prevalence of such signals, particularly in biomedical {applications}, there is a compelling need for a new RD algorithm.

\begin{figure}[hbt!]
\centering
\includegraphics[trim=4 20 4 55, clip, width=1\textwidth]{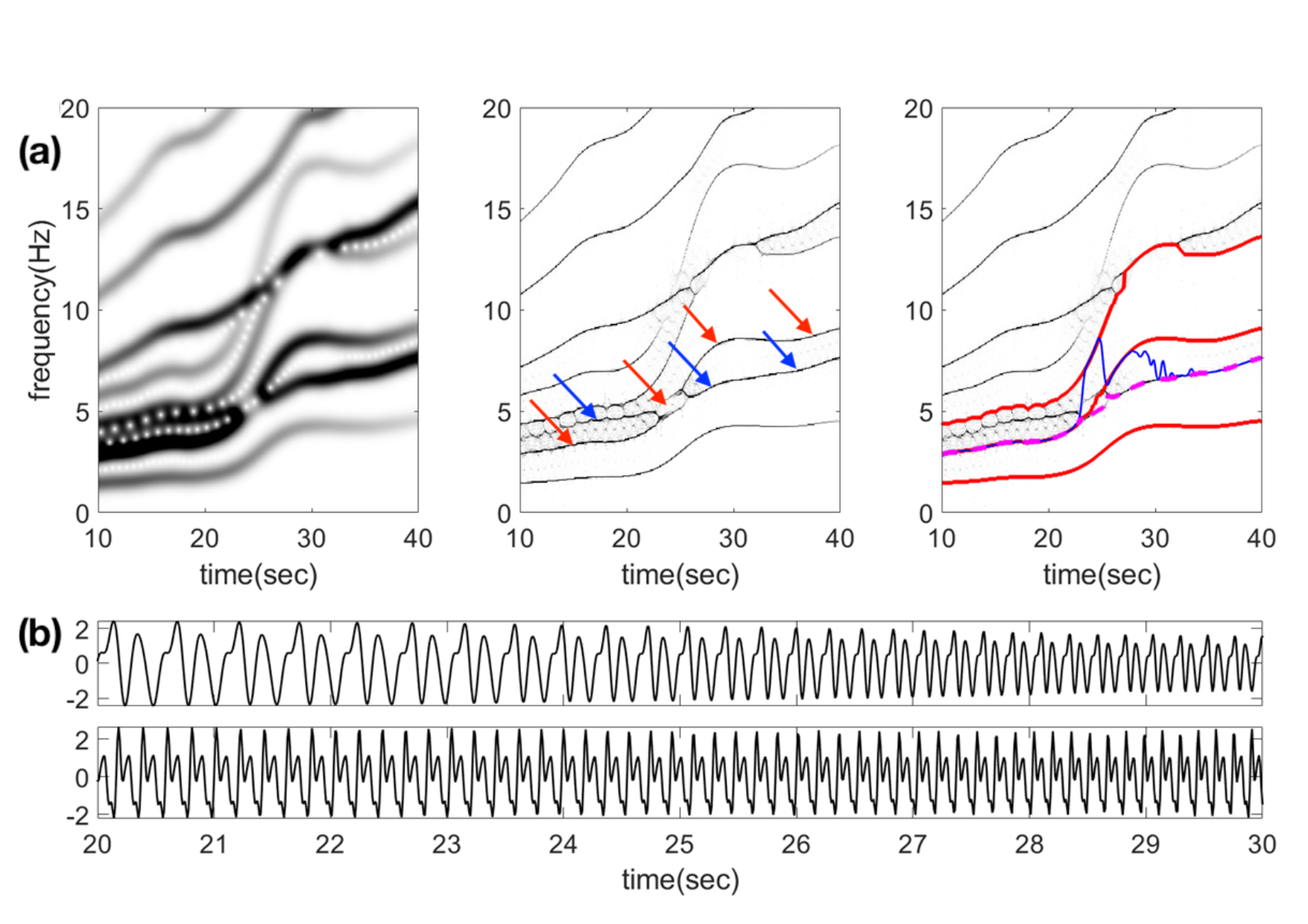}
\caption{Illustration of \textsf{Single-RD}, \textsf{FM-RD} and \textsf{RRP-RD} on a two-oscillatory components signal.
(a)-Left: Spectrogram.
We can see two curves that are associated with the fundamental components of the two IMT functions.
(a)-Middle: The second-order SST.
(a)-Right: The detection result using \textsf{SAMD-MHRD} (resp. \textsf{FM-RD} and \textsf{RRP-RD}) is superimposed on the 2nd-order SST as the red (resp. purple and blue) curves.
(b) A portion of the first and second IMT functions is shown for visual inspection.}
\label{two components demo}
\end{figure}

\section{Proposed ridge detection algorithm}\label{section multiple curve ext}

To address the aforementioned challenge, we introduce a novel RD algorithm within a peeling scheme framework, termed \textit{SAMD-based multiple harmonic RD} (\textsf{SAMD-MHRD}). This algorithm consists of two main iterative steps. First, it involves fitting ridges for multiple harmonics of a single IMT function, referred to as the {\em multiple harmonic RD} (\textsf{MHRD}) component. Subsequently, the algorithm progresses by iteratively ``peeling off ridges'' linked to the identified harmonics of the IMT function, constituting the \textsf{SAMD} phase. These steps are repeated until all IMT functions have been processed.   It is important to acknowledge that  each of these steps possesses its own interest and can be employed independently in alternative scenarios. Algorithm \ref{alg:improved-SAMD} provides a comprehensive summary of the \textsf{SAMD-MHRD} procedure. Below, we elaborate on these two steps and the iterative procedure. For the purpose of reproducibility, the Matlab implementation of the proposed algorithm and codes to regenerate results in Section \ref{section numerics} are available at  \url{https://github.com/yanweiSu/ridges-detection}.

Before detailing the algorithm's specifics, we highlight three innovations in \textsf{SAMD-MHRD}. First, we utilize the geometric structure of the TFR. To illustrate the idea, suppose $L=1$ and $\epsilon'=0$ in (C7). Then the STFT of $\sum_{j=1}^\infty a_{1,j}(t)\cos(2\pi j\phi_{1,1}(t))$ can be approximated by ${\frac{1}{2}}\sum_{j=1}^\infty a_{1,j}(t)\hat{h}(\xi-j\phi'_{1,1}(t))e^{2\pi i j\phi_{1,1}(t)}$ \cite{DaLuWu2011}; that is, at a fixed time $t$, the ridges associated with different harmonics are ``periodic'' in the frequency axis with the period $\phi'_{1,1}(t)$. This underlying periodic structure serves as the fundamental geometric element driving the design of \textsf{SAMD-MHRD}. Secondly, unlike the masking approach in \eqref{peeling scheme formula}, \textsf{SAMD-MHRD} employs the SAMD technique for ridge peeling. The masking technique can be limited in handling non-sinusoidal WSFs due to potential harmonic IF overlaps between different IMT functions. This can lead to error accumulation during iterations. In contrast, SAMD-based peeling effectively addresses these challenges and eliminates the need to determine optimal masking bandwidth. Third, \textsf{SAMD-MHRD} not only identifies ridges for all IMT functions but also decomposes the IMT functions themselves. Additionally, while SAMD aids ridge peeling in each iteration, \textsf{SAMD-MHRD} algorithm concurrently achieves signal decomposition like the original SAMD.

\subsection{Step 1: fit multiple ridges for harmonics (\textsf{MHRD})}
Given an input signal $f_1=f$ that satisfies \eqref{Model:equation2} with $L\geq 1$, let $\mathbf{R}_1\in \mathbb{C}^{N\times M}$ represent the discretized TFR of $f_1$, obtained from STFT or SST. Choose $K\geq 1$ as the intended number of harmonics to extract {\em simultaneously}. Using the notations from \eqref{singleCurveExt:theory}, we fit {$K$} ridges for the harmonics of {\em an} IMT function by solving the optimization problem:
\begin{align}
\mathbf{c}^*=&\,
\underset{\mathbf{c}:[N]\rightarrow[M]^K}{\argmax}
\sum_{k=1}^K
\Big[
\sum_{j=1}^N\left|\widetilde{\mathbf{R}}_1\left(j, e_k^{\top}\mathbf{c}(j)\right)\right|\label{multiCurveFormula:generic}
\\&-\lambda_k\sum_{j=2}^N\left|e_k^{\top}\left(\mathbf{c}(j)-\mathbf{c}(j-1)\right)\right|^2-\mu_{k}\sum_{j=1}^N \left|e_k^{\top}\mathbf{c}(j)-ke_1^{\top}\mathbf{c}(j)\right|^2\Big]\nonumber\,,
\end{align}
where $e_k\in \mathbb{R}^K$, $k=1,\ldots,K$, is a unit vector with $e_k(k)=1$, $\lambda_k\geq 0$ is the {\em smoothness} penalty as in (\ref{singleCurveExt:theory}), and $\mu_k\geq 0$ stands for the \textit{similarity} penalty that captures the time-varying wave-shape condition stated in (C7). Note that $\mu_{k}>0$ forces the estimated IF of the $k$-th harmonic, $e_k^{\top}\mathbf{c}$ to satisfy $e_k^{\top}\mathbf{c}(\ell)\approx ke_1^{\top}\mathbf{c}(\ell)$ at time $\ell\Delta_t$, and hence the geometric structure is respected. The output $\mathbf{c}^*\in [M]^{K\times N}$ represents the IFs of the first $K$ harmonics of an IMT function of the input signal $f$.

{We simplify \eqref{multiCurveFormula:generic} by introducing a set}
\begin{align}
\mathbf{S}^{(K)}&\,(\beta)=
\big\{\mathbf{c}:[N]\to
[M]^{K}:
\label{multiCurveExt: feasible region}\\
&
|e_k^\top\mathbf{c}(j)-ke_1^\top\mathbf{c}(j)|\leq \beta e_1^\top\mathbf{c}(j)\;\forall k\in[K]\big\}\,,\nonumber
\end{align}
where $\beta\geq 0$ is the chosen ``bandwidth'' that echos the similarity penalty {$\mu_k$} in (\ref{multiCurveFormula:generic}), so that (\ref{multiCurveFormula:generic}) becomes 
\begin{equation}
 \mathbf{c}^*=\underset{\mathbf{c}\in\mathbf{S}^{(K)}(\beta)}{\argmax}
\sum_{k=1}^K
\bigg[
\sum_{j=1}^N \left|\widetilde{\mathbf{R}}_1\left(j, e_k^{\top}\mathbf{c}(j)\right)\right| -\lambda_k\sum_{j=2}^N\left|e_k^\top(\mathbf{c}(j)-\mathbf{c}(j-1))\right|^2\bigg]\,.
\label{multiCurveFormula:practice}
\end{equation}
Solving (\ref{multiCurveFormula:practice}) directly can be inefficient. {We recommend a {\em segmentwise} approach.
This method leverages the slowly-varying IF's uniform continuity by approximating the curve with a finite sequence of ``thin rectangles'' $\{[t_{q-1},\,t_{q}]\times[L_q,U_q]\}_{q=1}^Q$. First, estimate the fundamental IF $\hat c$ by running the algorithm on $[t_0,\,t_1]$ over the whole frequency domain. Then, for each segment $[t_{q},\,t_{q+1}]$, run the algorithm over a frequency band $[-B_q,B_q]+\hat c(t_q)$, where $B_q>0$ is chosen properly to ensure the box $[t_{q},\,t_{q+1}]\times[\hat c(t_q)-B_q,\hat c(t_q)+B_q]$ covers the ridge. This process determine the fundamental IF over $[t_q,t_{q+1}]$.
Note that a smaller $B_q$ reduces the search space and improves efficiency, but it must not be too small to avoid missing the ridge. We suggest using $B_q=1$ and $t_q-t_{q-1}=1$ for all $q$, based on the slowly-varying IF assumption to ensure the ridge is covered.} This section of the algorithm is referred to as \textsf{MHRD} and is summarized in Algorithm \ref{alg1:MultiCurveExt}.

\begin{algorithm}[hbt!]
\caption{\textsf{SAMD-MHRD} algorithm.}
\label{alg:improved-SAMD}
\small
    \begin{algorithmic}[1]
    \STATE \textbf{Input:}  $\mathbf{f}_0$: the signal, $L\in \mathbb{N}$: the number of IMT functions,
    $I\in \mathbb{N}$: the iteration times, $\{t_q\}_{q=0}^Q$: a strictly increasing sequence of the time segment points with $t_Q:=N$ and $t_0:=0$,
    $\{B_q\}_{q=1}^Q$: the width of the fundamental band in each time interval.
    \STATE $\mathbf{f}\leftarrow\mathbf{f}_0$
        \FOR{$i=1:I$}
        \FOR{$\ell=1:L$}
        \STATE Run the second order STFT-SST: $\mathbf{R}\leftarrow \mathbf{S}^{[2]}_\mathbf{f}$.
        \STATE $\mathbf{c}_\ell\leftarrow\mathsf{MHRD}(\mathbf{R},K,\{\lambda_k\}_{k=1}^K,\beta,\{t_q\},\{B_q\})$ (See Algorithm \ref{alg1:MultiCurveExt}).
        \ENDFOR
        \STATE Reorder $\{e_1^\top\mathbf{c}_1,\cdots,e_1^\top\mathbf{c}_L\}$ from low value to high value as $\{e_1^\top\mathbf{c}_{(1)},\cdots,e_1^\top\mathbf{c}_{(L)}\}$, {where $e_1$ is defined in \eqref{multiCurveFormula:generic}}.
        \FOR{$\ell=1:L$}
        \STATE $\mathbf{x}_{\ell,1}^{[i]}\leftarrow\text{Reconstruction with }e_1^\top\mathbf{c}_{(\ell)}$ by (\ref{reconFormula}).
        \STATE $\mathbf{x}^{[i]}_\ell\leftarrow$ SAMD with the estimated fundamental mode $\mathbf{x}_{\ell,1}^{[i]}$.
        \STATE $\mathbf{f}\leftarrow\mathbf{f}_0-\mathbf{x}^{[i]}_\ell$.
        \ENDFOR
        \ENDFOR
        \STATE \textbf{Output:} $\{\mathbf{x}_1^{[i]}\}_{i=1}^I,\ldots,\{\mathbf{x}_L^{[i]}\}_{i=1}^I$.
    \end{algorithmic}
\end{algorithm}

\begin{algorithm}[hbt!]
\caption{Multiple harmonic ridge detection algorithm. \textsf{MHRD}$\big(\mathbf{R},K,\{\lambda_k\}_{k=1}^K,\beta,\{t_q\}_{q=1}^Q,\{B_q\}_{q=2}^Q\big)$}
\label{alg1:MultiCurveExt}
\begin{algorithmic}[1]
\small
    \STATE \textbf{Input:}
    $\mathbf{R}\in\mathbb{C}^{N\times M}$: the TFR,
    $K$: the number of desired harmonics,
    $\lambda=(\lambda_1,\cdots,\lambda_K)$: the smoothness penalty in (\ref{multiCurveFormula:practice}),
    $\beta$: the ``bandwidth'' in (\ref{multiCurveExt: feasible region}).

    \STATE  {Assign $t_0\leftarrow 0$.}  For each $q\in[Q]$, define a submatrix $\mathbf{R}_q\in\mathbb{C}^{(t_q-t_{q-1})\times M}$ of $\mathbf{R}$ as $\mathbf{R}_q(\ell,m)=\mathbf{R}(\ell+t_{q-1},m)$ for all $\ell\in\{1,\cdots,t_q-t_{q-1}\}$ and $m\in[M]$.

    \STATE Solve (\ref{multiCurveFormula:practice}) over $\mathbf{R}_1$ and get $\hat{\mathbf{c}}_1\in[M]^{t_1\times K}$.
    Assign $\hat{\mathbf{c}}\leftarrow \hat{\mathbf{c}}_1$.

    \FOR{ $q=2:Q$ }
        \STATE $\mathbf{S}^{(K)}(\beta)\leftarrow\mathbf{S}^{(K)}(\beta)\cap\{\mathbf{c}\in[M]^{(t_q-t_{q-1})\times K}:\mathbf{c}(\cdot,1)\in[-B_q,B_q]+\hat{\mathbf{c}}(t_{q-1},1)\text{ and }\mathbf{c}(1,\cdot)=\hat{\mathbf{c}}(t_{q-1},\cdot)\}$.
    \STATE Solve (\ref{multiCurveFormula:practice}) over $\mathbf{R}_q$ and ${\bf S}^{(K)}$ and get $\hat{\mathbf{c}}_q\in[M]^{(t_q-t_{q-1})\times K}$. Assign $\hat{\mathbf{c}}\leftarrow[\hat{\mathbf{c}}^\top\;\hat{\mathbf{c}}_q^\top]^\top$.
    \ENDFOR
    \STATE \textbf{Output: }$\widehat{\mathbf{c}}\in[M]^{N\times K}$, where the estimated IF of the $k$-th harmonic is given by the $k$-th column of $\hat{\mathbf{c}}$.
\end{algorithmic}
\end{algorithm}

\subsection{Step 2: Peel off ridges for harmonics by \textsf{SAMD}}

After identifying one IMT function's harmonic ridges using \textsf{MHRD}, we employ the SAMD algorithm \cite{colominas2021decomposing} to extract the corresponding IMT function {($\mathbf{x}^{[1]}_1$ in Algorithm \ref{alg:improved-SAMD})} related to the detected ridges with its fundamental component's IF. { In short, SAMD is an optimization-based fitting algorithm that uses the estimated fundamental component of an IMT function ($\mathbf{x}_{\ell,1}^{[i]}$ in Algorithm \ref{alg:improved-SAMD} using (\ref{reconFormula}) with $e_1^\top\mathbf{c}_{(\ell)}$) as inputs. The loss function accounts for the time-varying WSF, and its minimizer provides an estimate of the IMT function. See \cite{colominas2021decomposing} for details.}
 
Subtracting this extracted IMT function from the input signal $f_1$ results in a new signal denoted as $f_2$, containing $(L-1)$ IMT functions. The discretized TFR of $f_2$, determined by STFT or SST, is represented as $\mathbf{R}_2\in \mathbb{C}^{N\times M}$. Note that ridges related to the extracted IMT function do not exist in $\mathbf{R}_2$. This step thus serves as a replacement for \eqref{peeling scheme formula}.

Having obtained the initial IMT function and $\mathbf{R}_2$, we can proceed to iterate Step 1 (\textsf{MHRD}) on $f_2$ and $\mathbf{R}_2$ to extract the harmonic ridges of one of the remaining IMT functions. Then, Step 2 is once again executed to extract the associated IMT function using the SAMD algorithm. This iterative process is repeated for a total of $L$ cycles, resulting in the complete set of harmonic ridges for all IMT functions. These initial ridge estimates, denoted as $c_1^*,\ldots,c_L^*$, are ordered based on the fundamental components' IFs, from low to high. 

In practice, it is not guaranteed which IMT function will be extracted in Steps 1 and 2. It is possible that the IF and phase of its associated fundamental component is affected by the harmonics of other IMT functions with lower IF.
Thus, with the initial estimate $c_1^*,\ldots,c_L^*$, we repeat Steps 1 and 2 on $f_1$ and $\mathbf{R}_1$ by replacing $\mathbf{S}^{(K)}(\beta)$ in \eqref{multiCurveFormula:practice} by
\begin{align}
\bar{\mathbf{S}}^{(K)}&\,(\beta)=
\big\{\mathbf{c}:[N]\to
[M]^{K-1}:
\label{multiCurveExt: feasible region2}\\
&
|e_{k-1}^\top\mathbf{c}(j)-kc^*_1(j)|\leq \beta c^*_1(j)\;\forall k\in\{2,\ldots,K\}\big\}\nonumber
\end{align}
and estimating the phase via \eqref{reconFormula}.
This ridge estimate is less influenced by harmonics from other IMT functions due to the IF separation condition (C2). We then repeat Steps 1 and 2 on $f_2$ and $\mathbf{R}_2$ with $\mathbf{S}^{(K)}$ modified in the same way as \eqref{multiCurveExt: feasible region2} by considering $c_2^*$ . We continue this process iteratively with $c_3^*, c_4^*,\ldots$ and so on until ridges of all $L$ IMT functions are extracted, denoted as $\bar c_1^*,\ldots,\bar c_L^*$.

\subsection{Iteration}

While it is valid to conclude the process after these iterations, it is possible to enhance the results through an additional iteration of the $L$ cycles with $\bar c_1^*,\ldots,\bar c_L^*$. This reiteration step has the potential to yield improved outcomes, and we recommend considering this option for $I\geq 1$ times. It is worth noting that the output includes the decomposition of all IMT functions from the input signal $f$.

\subsection{Speed up \textsf{SAMD-MHRD} with auxiliary information}

Solving the optimization problem (\ref{multiCurveFormula:generic}) can be time-consuming, especially when $K$ is large and $\beta$ is wide. {However, if the phase of the fundamental component of the targeted IMT function is known or can be well estimated in advance, the algorithm can be accelerated.} 
Let $\widetilde{\mathbf{R}}$, $\lambda_k$ and $\mu_{k}$ be the same as \eqref{multiCurveFormula:generic}, and $c_1:[N]\rightarrow[M]$ be the {\em known} IF of the fundamental component; {that is, the extra auxiliary information}. 
In this case, we only need to estimate the higher-order harmonics, and hence (\ref{multiCurveFormula:generic}) is reduced to
\begin{align}
\mathbf{c}^*
 =\argmax_{\mathbf{c}:[N]\rightarrow[M]^K}
\sum_{k=1}^{K-1}\mathcal{L}^{(k+1)}_{c_1}(e_k^{\top}\mathbf{c})\,,
\label{multiCurveExt:with Fundamental}
\end{align}
where $\mathcal{L}_{c_1}^{(k)}(\mathbf{f}):=\sum_{j=1}^N\left|\tilde{\mathbf{R}}\left(j,\mathbf{f}(j)\right)\right|-\lambda_k\sum_{j=2}^N|\mathbf{f}(j)-\mathbf{f}(j-1)|^2-\mu_{k}\sum_{\ell=1}^N|\mathbf{f}(j)-kc_1(j)|^2$ for $k\in\{2,\cdots,K\}$ and $\mathbf{f}:[N]\rightarrow[M]$.
The following proposition shows that the optimization problem \eqref{multiCurveExt:with Fundamental} can be reduced to several single-valued curve optimization problems{, where the proof can be found in Section \ref{SIsection proof of prop: speed up algorithm}.}
\begin{proposition}\label{prop: speed up algorithm}
{\it Fix $K\geq 2$. The curve $\mathbf{h}_k:[N]\rightarrow[M]$ satisfying
\begin{equation}
\mathbf{h}_k
=
\argmax_{h:[N]\rightarrow[M]}\mathcal{L}_{c_1}^{(k+1)}(h)\,,
\end{equation}
where $1\leq k\leq K-1$, is one of the rows of $\mathbf{c}^*$ in (\ref{multiCurveExt:with Fundamental}).}
\end{proposition}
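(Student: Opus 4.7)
The plan is to exploit the separable structure of the objective function in \eqref{multiCurveExt:with Fundamental}. The key observation is that the sum $\sum_{k=1}^{K-1} \mathcal{L}_{c_1}^{(k+1)}(e_k^{\top}\mathbf{c})$ decouples across the index $k$: the $k$-th summand depends on $\mathbf{c}$ only through the single curve $e_k^\top \mathbf{c}:[N]\to[M]$, i.e.\ the $k$-th row (or column, depending on orientation) of the matrix $\mathbf{c}$. Since $\mathbf{c}$ ranges over all maps $[N]\to[M]^K$, the feasible set is a Cartesian product $[M]^N\times\cdots\times[M]^N$ ($K$ copies), with no constraint coupling different coordinates.

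First I would make this decoupling explicit: for any candidate $\mathbf{c}:[N]\to[M]^K$, write $h_k := e_k^\top \mathbf{c}$ so that the objective becomes $\sum_{k=1}^{K-1} \mathcal{L}_{c_1}^{(k+1)}(h_k)$, where each $h_k$ can be chosen independently from $[M]^N$. Next I would apply the elementary fact that for a sum of functions over a product domain, maximizing the sum equals the sum of maxima, attained coordinatewise; formally, $\max_{(h_1,\ldots,h_{K-1})} \sum_k \mathcal{L}_{c_1}^{(k+1)}(h_k) = \sum_k \max_{h_k} \mathcal{L}_{c_1}^{(k+1)}(h_k)$, with the maximizer given componentwise by $h_k^\star = \argmax_{h:[N]\to[M]} \mathcal{L}_{c_1}^{(k+1)}(h) = \mathbf{h}_k$. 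Therefore, any maximizer $\mathbf{c}^*$ of \eqref{multiCurveExt:with Fundamental} satisfies $e_k^\top \mathbf{c}^* = \mathbf{h}_k$, which is precisely the statement that $\mathbf{h}_k$ appears as the $k$-th row of $\mathbf{c}^*$.

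A minor point worth addressing is the $K$-th component $e_K^\top \mathbf{c}$, which does not appear in the objective and is thus arbitrary; one should remark that this degree of freedom does not affect the proposition, since the proposition only identifies rows for $1\leq k\leq K-1$. Non-uniqueness of the argmax on the finite lattice $[M]^N$ is likewise harmless: the statement only asserts that \emph{one} of the maximizing rows of $\mathbf{c}^*$ coincides with $\mathbf{h}_k$, so ties can be broken consistently.

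There is no substantive obstacle — the proposition is really a structural observation that the $\mu_{k}$-regularizer ties each higher harmonic only to the \emph{known} fundamental $c_1$ and not to the other unknown harmonics, which is exactly what breaks the coupling. The main value of writing the argument out is to emphasize this point, since it is precisely what makes the ``speed-up'' claim meaningful: once $c_1$ is supplied, the joint problem over $[M]^{K-1}$ reduces to $K-1$ independent single-curve problems of the form \eqref{singleCurveExt:theory} (augmented by the similarity penalty), each solvable by the penalized forward–backward greedy algorithm in $O(NM)$ time rather than the much larger joint search.
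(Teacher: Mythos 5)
Your proof is correct and takes essentially the same approach as the paper's: both rest on the separability of the objective in \eqref{multiCurveExt:with Fundamental} across the rows of $\mathbf{c}$, the paper writing it out as the sandwich inequality $\sum_{k}\mathcal{L}_{c_1}^{(k+1)}(e_k^{\top}\mathbf{c}^*)\leq\sum_{k}\mathcal{L}_{c_1}^{(k+1)}(e_k^{\top}\mathbf{h})\leq\sum_{k}\mathcal{L}_{c_1}^{(k+1)}(e_k^{\top}\mathbf{c}^*)$ with $\mathbf{h}$ built from the componentwise maximizers, and you invoking the same fact abstractly as ``max of a sum over a product domain equals the sum of the maxima, attained coordinatewise.'' Your added remarks on the uncoupled $K$-th row and on tie-breaking over the finite lattice are sound and slightly more careful than the paper's own write-up.
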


This proposition allows us to estimate harmonics' IFs by solving a \textit{modified single curve extraction} program with respect to $c_1$ for each $2\leq k\leq K$; that is, 
\begin{equation}
\mathbf{h}_k^*=\argmax_{h:[N]\rightarrow[M]}\mathcal{L}_{c_1}^{(k)}(h), 
\end{equation}
instead of solving (\ref{multiCurveExt:with Fundamental}) directly. This reduces the complexity of the original problem. 
{This auxiliary information is often available in many applications. For example, when analyzing photoplethysmogram (PPG) and electrocardiogram (ECG) signals, we can use the cardiac phase from the ECG as an accurate phase estimate for the cardiac component in the PPG signal. This allows us to speed up \textsf{SAMD-MHRD} when analyzing the cardiac component in the PPG signal.}

\subsection{Parameters selection}
\label{section parameter selection}

To optimize the performance of \textsf{SAMD-MHRD} for a given $K$, selecting suitable penalties $\lambda_1, \ldots, \lambda_K$ is crucial. To address this, we propose a simple grid search approach with the following rationale. Assuming that the algorithm accurately detects the ridges corresponding to the true IFs, we can consider masking the extracted curves on the TFR, similar to \eqref{peeling scheme formula}, by appropriately choosing $\eta_-$ and $\eta_+$. {To simplify the discussion, we apply the CFB approach \cite{Colominas_Meignen_Pham_2020} below, while VFB and MB can also be considered.} The resulting TFR should exhibit reduced signal components. This insight prompts us to employ the $\alpha$-R\'enyi entropy \cite{baraniuk2001measuring,Sheu_Hsu_Chou_Wu:2017}, where $\alpha>0$, as a metric to quantify and compare the energy distribution along the frequency axis of both the masked and original TFRs.

Consider the parameter set 
\begin{align}
\mathcal{S}_{\delta_\lambda,\Lambda}:=\big\{&(\lambda_1,\cdots,\lambda_K):\lambda_k=
(1-(k-1)\delta_\lambda)\lambda_1\nonumber
\\& \text{ for }  k=1,\ldots,K\,,\lambda_1\in\Lambda
\big\}\,,
\end{align}
where  $\delta_\lambda>0$ is small, $K$ satisfies $1-(K-1)\delta_\lambda>0$, and $\Lambda\subset(0,\infty)$ is a finite set. Note that while $\mathcal{S}_{\delta_\lambda,\Lambda}$ looks like a high-dimensional space, it is effectively a one-dimensional space. In this context, the ordering $\lambda_k<\lambda_\ell$ for $k>\ell$ adheres to the \textit{slowly varying} condition $|\phi_j^{\prime\prime}|\leq\epsilon j\phi_1^\prime$ for $j\geq 1$.
For the parameter $\beta$ required to satisfy the WSF condition in $\mathbf{S}^{(K)}(\beta)$ (\ref{multiCurveExt: feasible region}), we consider a finite subset $M\subset(0,2^{-1})$.

Take the TFR $\mathbf{R}$.
For each $\lambda_1\in\Lambda$ and $\beta\in M$, extract the curves by \textsf{SAMD-MHRD} with $(\lambda_1,\cdots,\lambda_K)\in S_{\delta_\lambda,\Lambda}$ and $\beta$, and denote the result to be $\mathbf{c}^*(\lambda_1,\beta)$.
Then, ``mask'' $\mathbf{R}$ by the ridges $\mathbf{c}^*(\lambda_1,\beta)$ following \eqref{peeling scheme formula}, and denote the masked TFR as $\mathbf{R}_{(\lambda_1,\beta),\Delta}$, where the masking size $\Delta>0$ respects the one used in the reconstruction formula (\ref{reconFormula}). Define a sequence $(q^{(\lambda_1,\beta)}(1),\cdots,q^{(\lambda_1,\beta)}(N))$ by $q^{(\lambda_1,\beta)}(\ell):=R_\alpha(|\mathbf{R}_{(\lambda_1,\beta),\eta}(\ell,\cdot)|)$, $\ell\in[N]$.
Finally, we choose
\begin{equation}
(\lambda_1^*, \beta^*)=\argmax_{(\lambda_1,\beta)\in\Lambda\times M}
\sum_{\ell=1}^N
\frac{q^{(\lambda_1,\beta)}(\ell)}{N}
\end{equation}
as the optimal $(\lambda_1,\beta)$ in $\Lambda\times M$.
In practice, $\Lambda$ is chosen to range from $10^{-1}$ to $10^1$ and $M$ is chosen to range from $2^{-6}$ to $2^{-1}$, both with a uniform grid in the log scale.

\section{Numerical simulation}\label{section numerics}

\subsection{Single IMT function ($L=1$)}\label{section simulation L=1}
In this first example, we compare different RD algorithms on signals with complicated WSF dynamics. Denote the smoothed standard Brownian motion $X_1(t)=W\ast K_{B}(t)$, where $W$ is the standard Brownian motion, $\ast$ indicates convolution, and $K_{B}$ is the Gaussian function with the standard deviation $B=20$ seconds.
Over $[0,50]$, set 
$$
A_1(t)=e^{-\left(\frac{t-10}{30}\right)^2}\left(3\int_0^t\frac{|X_1(s)|}{\|X_1\|_{L^\infty[0,50]}}ds+\frac{5}{2}\right)
$$
and 
$$
\phi_\ell(t)=\ell[\phi_0(t)+\int_0^t\frac{X_2(s)}{\|X_2\|_{L^\infty[0,50]}}ds]+U_\ell\ast K_{B'}(t)\,,
$$ 
where $X_2$ is an independent and identical copy of $X_1$, $U_\ell$ are independent white Gaussian process with mean 0 and variance $0.1$, $B'=5$ seconds, and
\begin{equation}
\phi_0(t)=
0.97t+\frac{t^{1.9}}{38}
+
\frac{3}{2}
\int^t_0
\int_0^s\frac{e^{-\left(\frac{u-25}{2.5}\right)^2}}{\int_0^{50}e^{-\left(\frac{\tau-25}{2.5}\right)^2}d\tau}duds\,.
\end{equation}
Note that the term $U_\ell\ast K_{B'}(t)$ models the time-varying WSF. Consider a non-stationary noise $\Phi$, where for $n=1,\ldots,5,000$, $\Phi(n)$ follows an ARMA$(1, 1)$ process with Student's t4 innovation process, and for $n=5,000,\ldots,10,000$, $\Phi(n)$ i.i.d. follows a Student's t5 distribution. We assume $\Phi$ is independent of $X_1$ and $X_2$
The simulated signal is defined over $[0,50]$ with the sampling rate $200$Hz so that 
\begin{equation}
Y_1(n):=s_1(n/200)+\sigma \Phi(n), 
\end{equation}
where $n=1,\ldots,10,000$, $s_1(t):=A_1(t)x_1(t)$, $\sigma\geq0$ is determined by the desired signal-to-noise ratio (SNR), defined as $20\log_{10}\frac{\texttt{STD}(s_1)}{\texttt{STD}(\Phi)}$ with \texttt{STD} standing for the standard deviation,
$x_1(t)=D_1\cos(2\pi\phi_1(t)) + (u_1+u_2)\cos(2\pi\phi_2(t))+u_1\cos(2\pi\phi_3(t))$, $D_1\in(0,1]$ models the intensity of the fundamental component, and
$u_1,u_2$ are independent random variables uniformly distributed on $[0,1)$ that are independent of $X(t)$ and $\Phi(n)$. {See Figure \ref{figure an example of simu sig of 0 dB} in Section \ref{section more numerical simulation} for a realization of $Y_1(t)$.}

Then, we realize independently $Y_1(t)$ over $[0,50]$ for $100$ times.
For a fair comparison with existing algorithms, we utilize the provided code from the authors \cite{Colominas_Meignen_Pham_2020,Laurent_Meignen_2021}, following their recommended guidelines.\footnote{The code for \textsf{FM-RD} is obtained through private communication with the authors. The code for \textsf{RRP-RD} is available at \url{https://github.com/Nils-Laurent/RRP-RD}.} However, since these algorithms are not tailored to handle WSFs, we need to make minor adjustments and assume the number of harmonics is known.  In the case of \textsf{FM-RD}, we incorporate the \textit{peeling} step (\ref{peeling scheme formula}) as a post-processing procedure. Essentially, we run \textsf{FM-RD} iteratively and apply TFR masking three times. Among the three extracted curves, we identify the one with the lowest frequency value as the result for the fundamental component's ridge.
Regarding \textsf{RRP-RD}, we concurrently detect three relevant ridge portions on the TF domain and extract the corresponding ridges. From these, we choose the ridge associated with the lowest frequency value as the outcome. If \textsf{RRP-RD} is unsuccessful in identifying three ridge portions, we rerun it to detect two RRPs. If this attempt also fails, we proceed with a single RRP. Finally, we assign the ridge with the lowest frequency as the final outcome.

Let $c_1^{(\square)}$ be the detected ridge by the algorithm $\square$, which could be \textsf{S}, \textsf{FM}, \textsf{RRP} or \textsf{MH}, standing for the \textsf{Single-RD}, \textsf{FM-RD}, \textsf{RRP-RD} or \textsf{SAMD-MHRD}.
To compare different algorithms, we consider the following metric:
\begin{equation}
\delta^{(\square)}:=\|\phi_1'-c_1^{(\square)}\|_2/\|\phi_1'\|_2\,.
\label{err definition in simulation part}
\end{equation}
We run the Wilcoxon signed-rank test on $100$ realizations to test the null hypothesis that the median of $\delta^{(\square)}-\delta^{(\textsf{MH})}$ is zero, with a significance level of $0.05$ and Bonferroni correction. 

The results are depicted in Fig. \ref{simulation A}, illustrating the distributions of ${\delta^{(\square)}}$ for a fixed $D_1$ (rows: $D_1=0.1$, $D_1=0.2$, $D_1=0.5$) and a constant noise level (columns: noise-free, SNR = 5 dB, SNR = 0 dB) with asterisks indicating instances where $p< 0.05/27$. The outcomes reveal that in the absence of noise, all algorithms yield comparable results. However, when encountering scenarios with a weak fundamental component and noise interference, \textsf{FM-RD} and \textsf{RRP-RD} exhibit limitations compared to \textsf{SAMD-MHRD}. {More numerical results for \textsf{RRP-RD} with different number of ridge portions can be found in Section \ref{section more numerical simulation}.}

\begin{figure}[hbt!]
\centering
\includegraphics[width=0.95\textwidth]{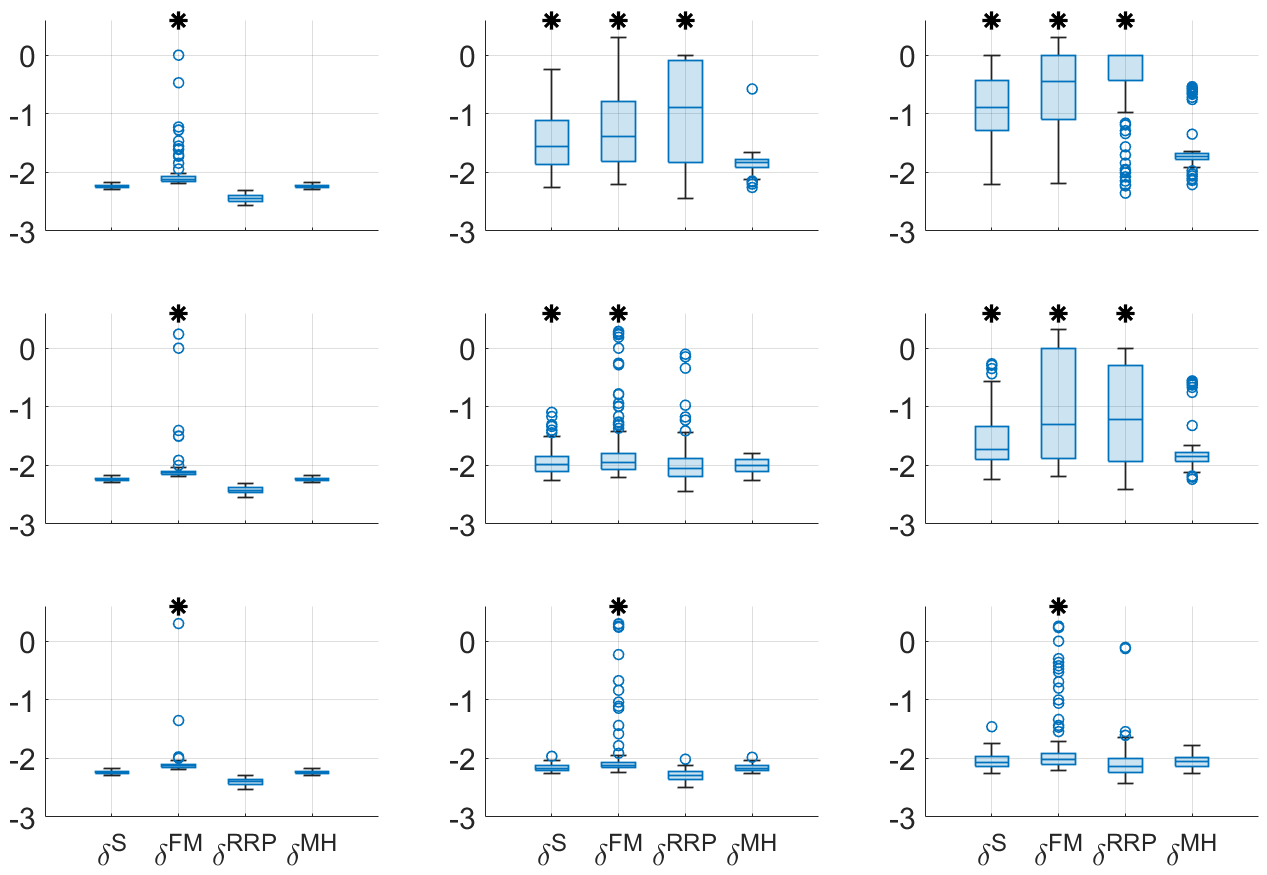}
\caption{
Log-scale distributions of $\delta^{(\square)}$ for various RD algorithms.
Top to bottom rows correspond to $D_1=0.1$, $D_1=0.2$, and $D_1=0.5$, respectively.
From left to right columns: noise-free, SNR = 5 dB, and SNR = 0 dB.}
\label{simulation A}
\end{figure}

\subsection{Multiple IMT functions ($L>1$)}\label{subsection numerics L>1}
Following the same notations used in Section \ref{section simulation L=1}, we consider the case when $L=2$.
Denote $X_3$ and $X_4$ as two independent copies of the smoothed standard Brownian motion and are independent of $X_1$, $X_2$, $\Phi$, $u_1$ and $u_2$.
Over $[0,50]$, set two random processes
$$
A_2(t):=e^{-\left(\frac{t-40}{25}\right)^{1.8}}
\left(3\int_0^t\frac{|X_3(s)|}{\|X_3\|_{L^\infty[0,50]}}ds+2.3\right)
$$
and
$$
\phi_2(t)=\phi_0^*(t)
+(\phi(t)-\phi_0(t))
+\int_0^t\frac{X_4(s)}{\|X_4\|_{L^\infty[0,50]}}ds\,,
$$
where $\phi_0^*(t)=2.33t+0.2t^2$.
The second simulated signal is defined over $[0,50]$ with the sampling rate $200$Hz so that
\begin{equation}
Y_2(n):=s_1(n/200)+s_2(n/200)+\sigma \Phi(n),
\end{equation}
where $s_2(t):=A_{2}(t)x_2(t)$, $x_2(t)=D_2\cos(2\pi\phi_1^*(t)) +(u_1^*+u_2^*)\cos(2\pi\phi_2^*(t)) +(u_1^*)\cos(2\pi\phi_3^*(t))\big]$, $D_2\in(0,1]$ models the intensity of the fundamental component, $u_1^*$ and $u_2^*$ are the independent copies of $u_1$ and $\sigma\geq0$ is again determined by the desired SNR. Then we realize 100 copies of $Y_2$ for each selected SNR.

See Fig. \ref{fig SAMD example} for the RD results and a decomposition example of \textsf{SAMD-MHRD}. Both IMT functions are successfully recovered. We then run \textsf{SAMD-MHRD} and evaluate the reconstruction error of the $j$-th IMT function as
\begin{equation}
\delta^{[q]}_j=\|s_j-\widehat{s}^{[q]}_j\|_2/\|s_j\|_2\,,
\label{RMSE of the recon using SAMD-MHRD}
\end{equation}
{where the superscript $q$ indicates the $q$-th iteration. See Fig. \ref{fig_simu: two components0} for the comparison of $\delta^{[1]}_j$ and $\delta^{[3]}_j$ under different SNRs.} As SNR decreases, errors increase. Additionally, the second IMT exhibits significantly larger recovery error than the first IMT, as tested with the Wilcoxon test at a significance level of $0.005$, which is reasonable due to its dependence on the harmonics of the first IMT. Iteration improvement is statistically significant except for the second IMT at high noise levels.

\begin{figure}[hbt!]
\centering
\includegraphics[width=1\textwidth]{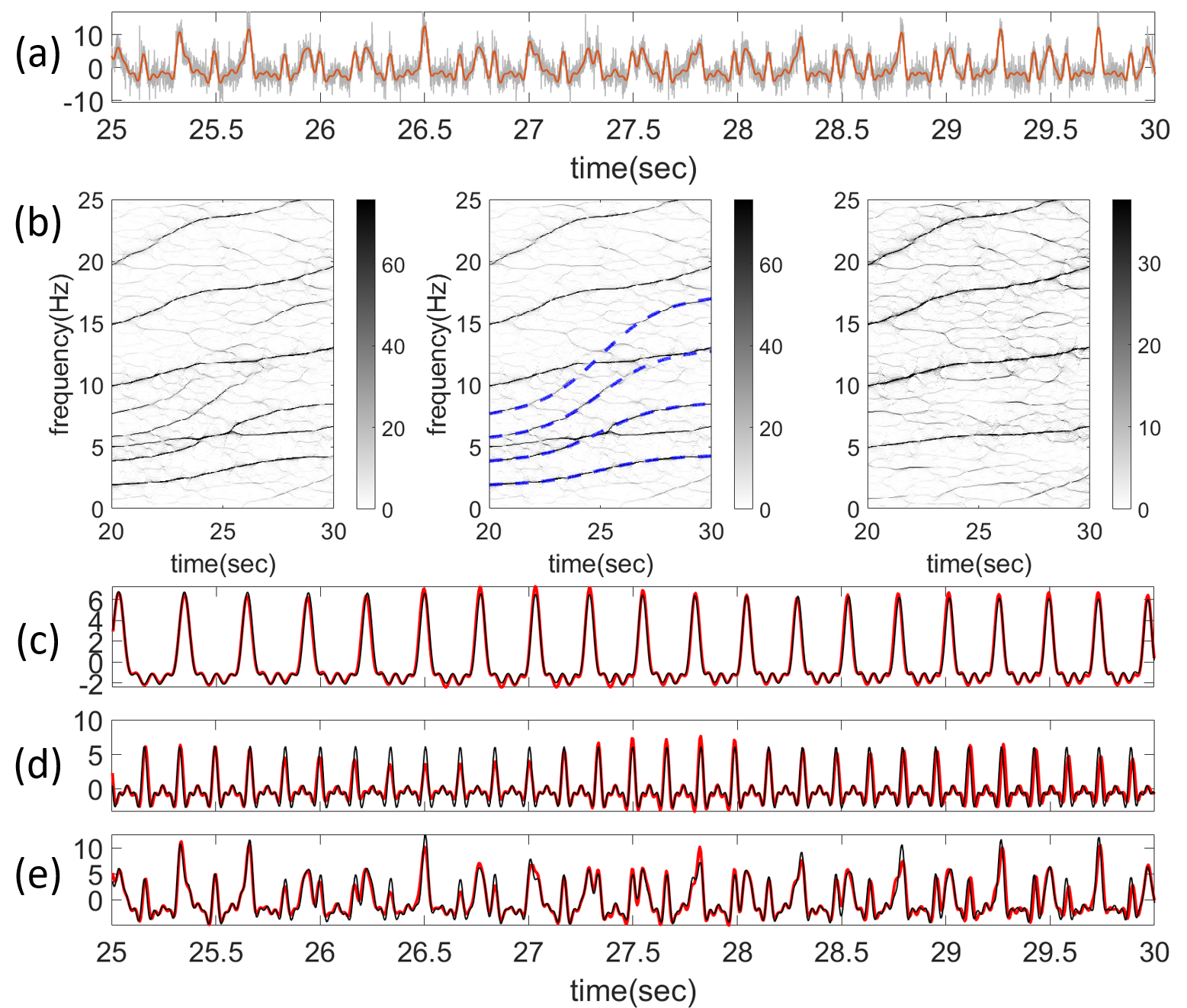}
\caption{(a) A signal fulfilling the ANHM with $L=2$ contaminated by noise $\Phi$ with an SNR of 5dB is shown as the grey curve. The clean signal is superimposed as the red curve.
(b)-Left: the 2nd-order SST of the noisy signal.
(b)-Middle: the true IFs of the first three harmonics of the first IMT function are superimposed as blue-dashed curves.
(b)-Right: the 2nd-order SST of the extracted first IMT function by \textsf{SAMD-MHRD} with 3 iterations.
(c) and (d): The reconstructed first and second IMT functions are shown as the red curves and the true IMT functions are superimposed as the black curves.
(e): The superposition of the reconstructed IMT functions is shown as the red curve and the clean signal is superimposed as the black curve.
\label{fig SAMD example}}
\end{figure}

\begin{figure}[hbt!]
\centering
\includegraphics[trim=100 0 100 0, clip,width=1\textwidth]{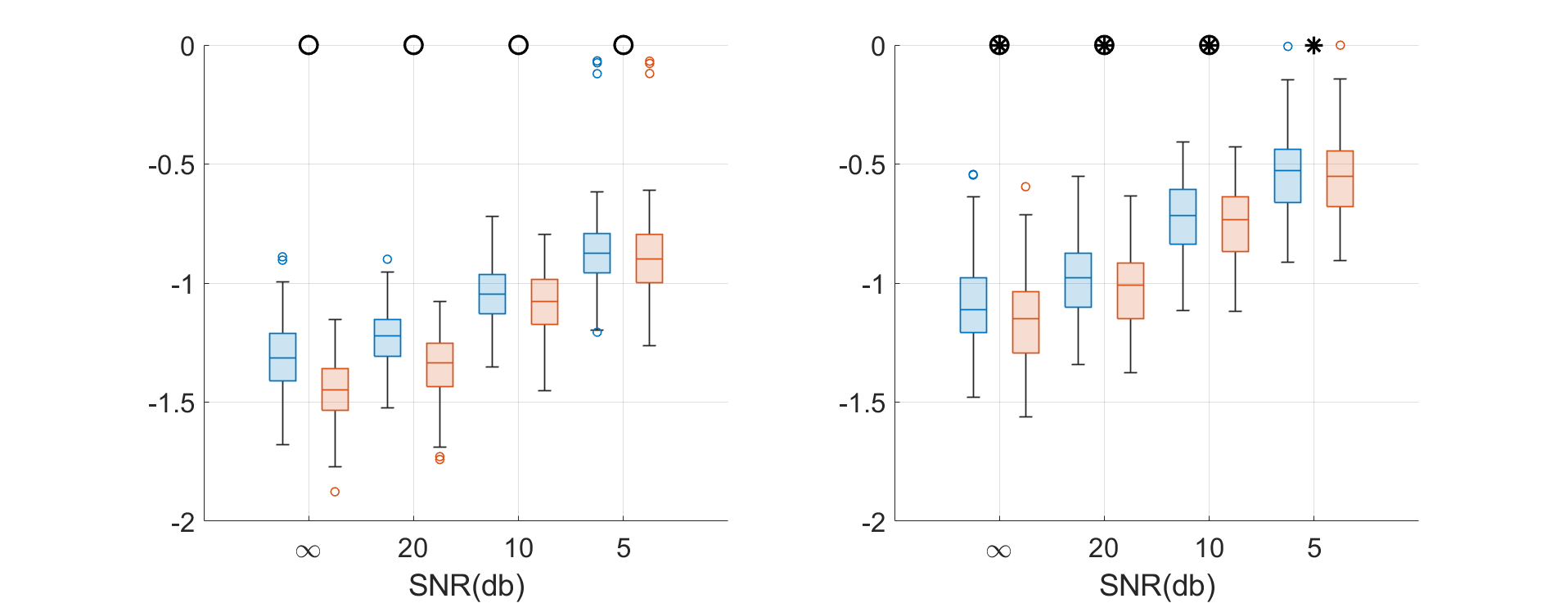}
\caption{{Errorbars of 100 realizations of $\log(\delta_1^{[q]})$ (left) and $\log(\delta_2^{[q]})$ (right), where $\delta_j^{[q]}$ is defined in \eqref{RMSE of the recon using SAMD-MHRD}.
Blue: first iteration ($q=1$), pink: third iteration ($q=3$).}
Asterisk: recovery error of IMT2 is significantly greater than that of IMT1 at the same noise level.
Circle: $\log(\delta_1^{[1]})$ is significantly greater than $\log(\delta_1^{[3]})$ at the same noise level.}
\label{fig_simu: two components0}
\end{figure}

\section{Application to walking prediction from IMU}\label{section real signals app}

We now apply the proposed algorithm to detect the walking activity from the IMU signal. The data obtained from the triaxial accelerometer signals are denoted as $f^{[\texttt{loc}]}_x(t)$, $f^{[\texttt{loc}]}_y(t)$, and $f^{[\texttt{loc}]}_z(t)$, where \texttt{loc} indicates the sensor's placement location, and $x$, $y$, and $z$ represent the axes. We focus on the {\em physical activity} signal, defined as
\[
Y^{[\texttt{loc}]}(t)=\sqrt{[f^{[\texttt{loc}]}_x(t)]^2+[f^{[\texttt{loc}]}_y(t)]^2+[f^{[\texttt{loc}]}_z(t)]^2}\,.
\]
To detect the walking activity, we modify the ANHM model in \eqref{Model:equation2} with $L=1$ slightly to account for $Y^{[\texttt{loc}]}(t)$. Specifically, we express it as follows:
\begin{equation}
\label{Model:equation3}
Y^{[\texttt{loc}]}(t) = \sum_{q=1}^Q \sum_{j=1}^D a_{q,j}(t)\cos(2\pi\phi_{q,j}(t))\chi_{I_q}+\Phi(t)\,,
\end{equation} 
where $I_q$, $q=1,\ldots,Q$, are disjoint connected intervals in $\mathbb{R}$ that describes the wax-and-wane effect of walking; that is, a subject might walk for a bit, stop for a while, and continue to walk. Physically, $\sqrt{\sum_{j=1}^\infty a_{q,j}(t)^2}$ reflects the intensity of the walking activity over $I_q$, with both strength and pattern exhibiting variations between different steps. The walking activity encompasses a fusion of activities across body locations, resulting in the dependence of $a_{q,j}(t)$ on sensor placement and synchronization of $\phi_{q,1}(t)$ across sensors. Even within the same location, $\phi_{q,1}(t)$, $\phi_{q,1}'(t)$, and $a_{q,j}(t)$ can differ between different walking bouts. For instance, the walking pattern on a flat surface during interval $I_i$ might differ from the pattern during stair climbing in interval $I_j$. This model reduces to \eqref{Model:equation2} with $L=1$ when $Q=1$, and has been used in \cite{wu2023application} for cadence estimation.

Finally, we make an assumption regarding the durations of walking intervals. The definition of ``walking'' lacks universal consensus \cite{urbanek_prediction_2018}, leading to questions about whether a mere one or two steps qualify as ``walking'' and what is the number of continuous steps needed to classify an interval as ``walking''. While our work does not aim to address this scientific matter, we provide a mathematical definition based on the properties of the TF analysis tools we utilize. Building upon (C1)-(C8), we introduce the following assumption:
\begin{itemize}
\item[(C9)]
For each $q=1,\ldots,Q$, we have $|I_q|>C/\phi_{q,1}'(t)$ for all $t\in I_q$ and a sufficiently large $C>1$. 
\end{itemize}
That is, we require $|I_q| > \phi_{q,1}^{-1}(2\pi C) - \phi_{q,1}^{-1}(0)$, as the phase advances by $2\pi$ after each cycle. Empirical evidence suggests that, to reliably estimate the IF of an oscillatory signal using STFT or SST, the window function should span around $8$ to $10$ oscillatory cycles. Therefore, we designate a process as ``walking'' only if a subject performs more than $8$ continuous steps by setting $C=8$. Note that $I_q$ can vary depending on the walking pace; faster cadence leads to shorter $I_q$. This definition aligns with the concept of {\em sustained harmonic walking} (SHW) presented in \cite{urbanek_prediction_2018}, where SHW involves walking for at least $10$ seconds (in line with C9) with minimal variability in step frequency (in line with C2).

In practice, $I_1,\ldots,I_Q$ are unknown. The main signal processing mission we consider in this section is estimating $I_1,\ldots,I_Q$ from one realization of the random process $Y$ (or recorded accelerometer signal).

\subsection{Database}
\label{Section: measurements}
We analyze the Indiana University Walking and Driving Study (IUWDS)\footnote{\url{https://physionet.org/content/accelerometry-walk-climb-drive/1.0.0/}} dataset, collected in 2015 from $n=32$ individuals (19 females) aged 21 to 51 years. This study took place in a semi-controlled environment, where participants were directed to follow a route involving walking on flat ground and climbing stairs. Data was recorded using ActiGraph GT3X+ accelerometers at 100 Hz. Fig. \ref{fig2} provides an example of this dataset. The study was ethically approved by the IRB of Indiana University, and participants provided informed written consent. The dataset comprises recordings from four locations: wrist (\texttt{wr}), hip (\texttt{hi}), left ankle (\texttt{la}), and right ankle (\texttt{ra}), alongside expert annotations.

\subsection{Our proposed index for walking detection}
To harness the distinctive properties of WSF, we propose employing \textsf{SAMD-MHRD} for devising a walking detection metric. Denote the extracted IFs of the harmonics by \textsf{SAMD-MHRD} as $\mathbf{c}\in \mathbb{R}^{K\times N}$, with $K\in \mathbb{N}$ representing the user-specified number of harmonics.
We introduce the {\em SST walking strength index} (SST-WSI) at time $\ell\Delta_t$ (where $\ell=1,\ldots,N$) as follows:
\begin{align*}
{\textsf S}_{\texttt{SST}}(\ell\Delta_t):=&\,\sum_{k=1}^Q\bigg| \sum_{j\in B^{(k)}_\ell}\mathbf{S}(\ell\Delta_t,\,j)\bigg|
\left[ \sum_{q=1}^{M} \left|\mathbf{S}(\ell\Delta_t,\,q)\right| \right]^{-1},
\end{align*}
where $\mathbf{S}$ is the SST of $Y^{[\texttt{loc}]}(t)$, 
\begin{align}
B^{(k)}_\ell=&\,\left\{q: \,\max\{1,\,e_k^{\top}\mathbf{c}(\ell)-b\}\leq q  \right.\\
&\quad\left.\leq \min\{M,\,e_k^{\top}\mathbf{c}(\ell)+b\}\right\}\subset [M],\nonumber
\end{align}
$\mathbf{c}$ is determined by \textsf{SAMD-MHRD}, 
$b\in\mathbb{N}$ is the bandwidth chosen by the user, and $\left[ \frac{f_s}{2M} \sum_{q=1}^{M} \left|\mathbf{S}(\ell\Delta_t,\,q)\right| \right]^{-1}$ is a normalization factor.
We set $b$ so that $B^{(k)}_\ell$ and $B^{(k-1)}_\ell$ are sufficiently separate. 
An intuitive interpretation of SST-WSI is capturing the ratio of energy associated with the walking activity.  Specifically, the extracted curves ${e_1^{\top}\mathbf{c},\cdots,e_K^{\top}\mathbf{c}}$ correspond to the most prominent features in the TFR, effectively capturing instances of walking activity. See Section \ref{section justification of SST-WSI} for a justification of SST-WSI. Since the WSF depends on the subject, sensor location and the walking pattern, {$D$} needed for the walking activity detection depends on these parameters. To simplify the discussion, we fix an empirical {$D$} in this work.

\subsection{Results}
The number of subjects {is $32$, with the numbers of walking and non-walking segments in the database} $21,245$ and $38,623$.
We consider four existing walking detection indices, including the \textit{SHW index}, the \textit{Entropy Ratio index}, the Hilbert transform based index called the Hilbert-WSI, and the index of freezing of gait called FOG-WSI \cite{moore2008ambulatory}. Details of these indices' implementation can be found in Section \ref{section existing WSIs} in the supplementary materials. For the \textit{SST-WSI} index, we set $D=8$ and $b=0.08$Hz.

We use Leave One Subject Out Cross Validation (LOSOCV) to evaluate the performance. In each round, one subject is used as the test case while the others determine the threshold. This threshold is then applied to the test subject, and performance is recorded. Results are shown in Fig. \ref{WSI-wrist-boxchart}. For Hilbert-WSI, some validation results give zero true positives, and we define its F1-score to be 0. We use the Wilcoxon signed-rank test ($p=0.05$) with Bonferroni correction to assess significance between indices. SST-WSI and Entropy-Ratio are the top-performing indices, with SST-WSI consistently better.

Finally, we combine SST-WSI and Entropy-Ratio using a support vector machine. Fig. \ref{WSI-wrist-boxchart} presents the {LOSOCV} results, indicating that this combination outperforms other indices. Additional results for different sensor placements can be found in Section \ref{more IMU location comparison results}.

\begin{figure}[hbt!]
\centering
\includegraphics[width=1\textwidth]{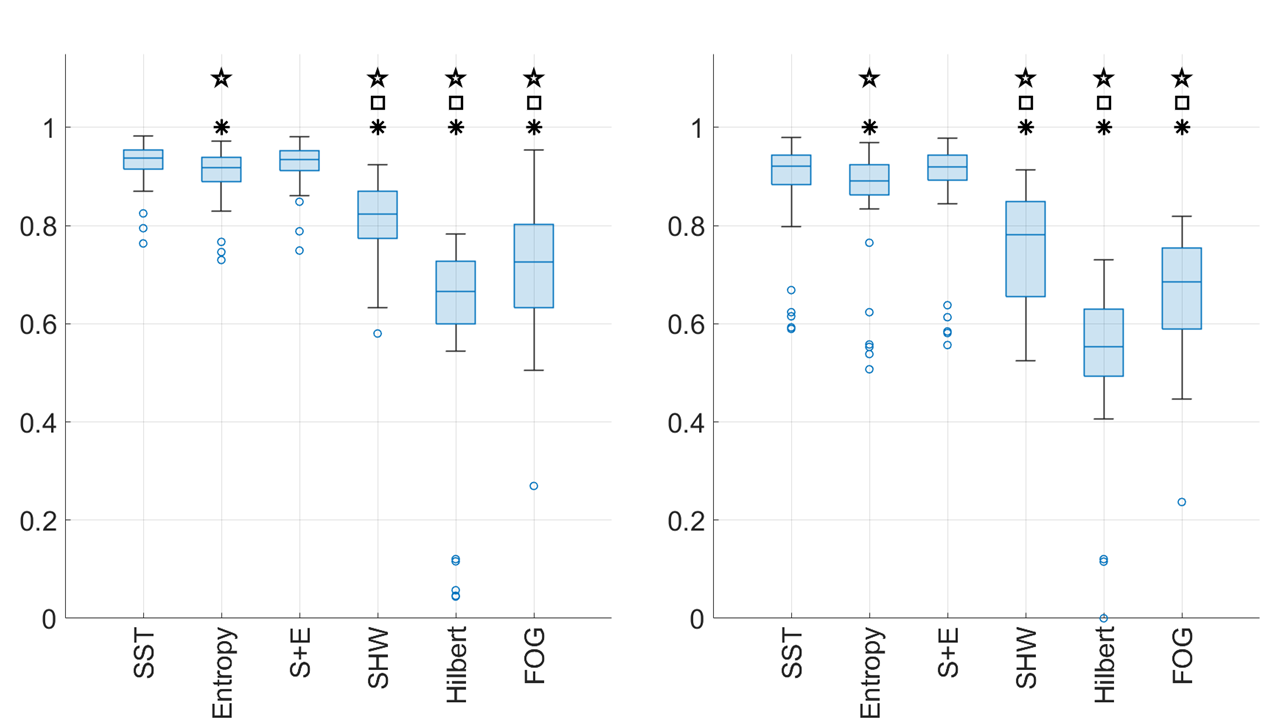}
\caption{The performance records of conducting LOSOCV on analyzing the WRIST signal.
Left: The accuracy distributions.
Right: The F1-score distributions. 
The asterisk (resp. square and pentagram) marks indicate the performance is lower than that of SST-WSI with statistical significance (resp. Entropy-Ratio group and SST-WSI+Entropy-Ratio, denoted as S+E) with the $p$-value less than $0.05/18=0.0028$. \label{WSI-wrist-boxchart}}
\end{figure}

\section{Discussion and Conclusion}\label{section discussion conclusion}

We introduce a novel RD algorithm, \textsf{SAMD-MHRD}, designed for both STFT and SST, to effectively manage nonstationary signals featuring intricate non-sinusoidal WSFs. Numerical investigations and real-world applications showcase its practical promise. This endeavor can be seen as an extension of \cite{Wu:2013}, filling the RD gap toward signal processing missions.

Naturally, one may question its adaptability to other TFRs, such as the commonly used CWT. The CWT's inherent affine structure can be limiting for oscillatory signals with non-sinusoidal WSFs, as it requires a small scale to capture higher-order harmonics while dealing with broad spectral support, leading to spectral interference among these harmonics. On the other hand, \textsf{SAMD-MHRD} may be applicable to other TF analysis algorithms that can separate harmonics within their TFRs. Further exploration in this area is needed.

Another strategy for handling non-sinusoidal WSFs involves using de-shape STFT \cite{lin2016waveshape}. This method aims to mitigate the impact of harmonics in complex situations and could potentially be used to extract the fundamental IF of each IMT function. However, its success relies on the presence of a strong fundamental component. In cases where the fundamental component is weak or absent, preprocessing steps like rectification or nonlinear transforms \cite{steinerberger2022fundamental}, or the application of a comb filter as explored in \cite{urbanek_prediction_2018}, might be necessary to enhance the fundamental IF before applying de-shape STFT. Note that the fundamental component can be absent since $s(t)=\sum_{k=1}^\infty \alpha_k \cos(2\pi kt+\beta_k)$, where $\alpha_k\geq 0$ and $\beta_k\in [0,2\pi)$ is $1$-periodic if the greatest common divisor of $\{k\mid \alpha_k>0\}$ is $1$. We plan to explore the development of an RD algorithm based on this concept in our future work.

Several challenges and open questions remain in this study. Firstly, we assume knowledge of $L$ to simplify the analysis. Extending existing tools \cite{SucicSaulig2011,SauligPustelnik2013,Laurent_Meignen_2021,ruiz2022wave} to handle more general WSF cases is a promising avenue. Secondly, our assumption that the IFs of fundamental components do not overlap needs refinement to address the common scenario of mode mixing. Extending the RD algorithm to accommodate the synchrosqueezed chirplet transform \cite{chen2023disentangling} could be a solution for this challenge.
The walking detection problem is essentially a change point detection problem. We can potentially generalize the existing bootstrapping-based change point detection algorithm by integrating information related to IF and IA and techniques from \cite{ruiz2022wave} to yield a more accurate index. These promising directions will be explored in our future work.

\section*{Acknowledgement}

{The authors would like to thank the anonymous reviewers for their constructive feedbacks.}

\bibliographystyle{plain}
\bibliography{Wristcadence}

\appendix

\section{Numerical implementation of TF analysis tools}\label{section numerical implementation of TFA}

We summarize the numerical implementation of STFT and SST in this section. Suppose the signal $f$ is discretized with a sampling rate $f_s$ over the time interval $[0,T]$, where $T>0$, denoted as  $\mathbf{f}$; that is,
$\mathbf{f}=(f(0),f(1/f_s),\cdots,f(T/f_s))$ with $N=\lfloor T/f_s\rfloor$, where $\lfloor x\rfloor$ means the largest integer smaller or equal to $x\in \mathbb{R}$. Denote $\Delta_t=1/f_s>0$ as the discretization grid in the time domain. Denote $\Delta_\xi>0$ as the frequency resolution chosen by user. 
Then, the discretization of STFT of $f$ is given by
\begin{equation}
    \mathbf{V}(\ell,q) = V_f^{(h)}(\ell\Delta_t,q\Delta_\xi)\,,
    \label{STFT}
\end{equation}
where $\ell=1,\ldots,N$ is the total number of samples in the time domain and $q=1,\ldots,M$, where $M=\lfloor\frac{f_s}{2\Delta_\xi}\rfloor$ is the total number of ticks in the frequency domain. In other words, the discretization of the TFR determined by STFT is a complex matrix $\mathbf{V}\in \mathbb{C}^{N\times M}$. Note that we skip evaluating STFT at frequency $0$.

Numerically, this discretization is carried out in the following way. Discretize the window $h$ and denote it as $\mathbf{h}\in \mathbb{R}^{2K+1}$, where $K\in \mathbb{N}$ and $2K+1$ indicates the window length. A concrete example is discretizing the Gaussian with the standard deviation $\sigma>0$ over the interval $[-0.5, 0.5]$ with a discretization grid size of $\frac{1}{2K}$ as the window by setting
\begin{gather}
\mathbf{h}(k) = e^{-\frac{\left(\frac{k-1}{2K} - 0.5 \right)^2}{2\sigma^2}}
\end{gather}
for $k = 1, \ldots, 2K+1$. Note that the discretization grid size $\frac{1}{2K}$ may be different from $\Delta_t$. As the rule of thumb, we choose $K$ so that the window covers about $8\sim 10$ cycles if the data is not too noisy, and choose a larger $K$ so that the window covers more, like $15\sim 20$ cycles if the data is noisy. Usually, if the Gaussian function is taken as the window, we suggest to choose $\sigma$ so that the Gaussian function is ``essentially supported'' on $[-0.5, 0.5]$; that is, $\int_{-0.5}^{0.5} e^{-\frac{t^2}{2\sigma^2}}dt$ is close to $1$. We mention that it is possible to a time-dependent $\sigma$ to enhance the overall performance. For example, take the Re\'nyi entropy as a metric to determine the optimal $\sigma$ \cite{Sheu_Hsu_Chou_Wu:2017}. We skip this detail and refer readers to \cite{Sheu_Hsu_Chou_Wu:2017} for details. 
To implement $\mathbf{V}$, extend $\mathbf{f}$ beyond $1,\ldots,N$ so that $\mathbf{f}(l) := 0$ when $l < 1$ or $l > N$. It is possible to extend $\mathbf{f}$ by forecasting to avoid the boundary effect \cite{meynard2021efficient}, but we skip this technical detail to simplify the discussion.
Then, compute
\begin{equation}\label{definition STFT discretization}
\mathbf{V}(n, m) = \sum_{k=1}^{2K+1} \mathbf{f}(n + k - K - 1) \mathbf{h}(k) e^{-i2\pi \frac{(k-1)m}{2M}},
\end{equation}
where $n=1,\ldots,N$ is the index in the time domain and $m=1,\ldots,M$ is the index in the frequency domain.

The numerical implementation of SST is denoted as $\mathbf{S}\in \mathbb{C}^{N\times M}$, which is a nonlinear transform of STFT. Here we focus on its numerical implementation and refer readers to \cite{DaLuWu2011,Chen_Cheng_Wu:2014} for its theory in the continuous setup. 
The main idea beyond SST is utilizing the phase information hidden in STFT. Consider the discretization of the derivative of the window function $h$, which is denoted as $\mathbf{h}'\in \mathbb{R}^{2K+1}$. In the Gaussian window case, it is defined as
\[
\mathbf{h}'(k) = -\left(\frac{k-1}{2K} - 0.5 \right)\frac{\mathbf{h}(k)}{\sigma^2}\,.
\] 
Then, compute another STFT of $\mathbf{f}$ with the window $\mathbf{h}'$, denoted as $\mathbf{V}' \in \mathbb{C}^{N \times M}$, by
\begin{equation}
\mathbf{V}'(n, m) = \sum_{k=1}^{2K+1} \mathbf{f}(n + k - K - 1) \mathbf{h}'(k) e^{\frac{-i2\pi (k-1)m}{2M}}\,,
\end{equation}
where $n=1,\ldots,N$ is the index in the time domain and $m=1,\ldots,M$ is the index in the frequency domain. 
Next, compute the {\em frequency reassignment operator}, denoted as ${\Omega}\in \mathbb{C}^{N \times M}$, which is defined as
\begin{gather}\label{definition Omega}
{\Omega}(n, m) = \left\{
\begin{array}{ll}
-\Im\left(\frac{N}{2\pi(2K + 1)}\frac{\mathbf{V}'(n,m)}{\mathbf{V}(n,m)}\right) &\mbox{ when }|\mathbf{V}(n,m)|> \upsilon\\
-\infty&\mbox{ when }|\mathbf{V}(n,m)|\leq \upsilon\,,
\end{array}
\right.
\end{gather}
where $\Im$ is the operator of evaluating the imaginary part and $\upsilon > 0$ is a threshold chosen by the user. In practice, $\upsilon$ is set to avoid possible blowup situation that $|\mathbf{V}(n,m)|\approx 0$ and $|\mathbf{V}'(n,m)|$ is large, and can be set to $10$ times of the machine epsilon. ${\Omega}(n, m)$ gives the {\em instantaneous frequency} information of some IMT inside the signal at time $n\Delta_t$ and frequency $m\Delta_\xi$. See \cite{DaLuWu2011,Chen_Cheng_Wu:2014,sourisseau2019inference} for more technical details.
Finally, the SST of $\mathbf{f}$ is computed by 
\begin{gather}
\mathbf{S}(n, m) = \sum_{l;\, m= l- {\Omega}(n, m)} \mathbf{V}(n, l) \,,
\end{gather}
where $n=1,\ldots,N$ is the index in the time domain and $m=1,\ldots,M$ is the index in the frequency domain. In short, to detect if the input signal oscillates at frequency $m\Delta_\xi$, we find all STFT coefficients sharing the same instantaneous frequency information by searching over $\Omega$, sum them together, and put the result in the $m$-th entry of the $n$-th row of $\mathbf{S}$. This step is called ``squeezing''. Since the squeezing happens at a fixed time, this motivated the ``synchro'' part of the nomination SST.

In this paper, we denote the discretized TFR of $f$, either determined by STFT or SST, as $\mathbf{R}\in \mathbb{C}^{N\times M}$.

\section{More details of numerical simulation}\label{section more numerical simulation}

{In this section, we provide more details of numerical simulation in Section \ref{section numerics}. A realization of the simulated signal with a signal-to-noise ratio of 0 dB, as discussed in Section IV.A, is shown in Figure \ref{figure an example of simu sig of 0 dB}.
For RRP-RD, a key parameter is the number of ridge portions, set to $3$ in the main article. However, noise can cause ridges to split into different ridge portions. We extended the simulation to test various numbers of ridge portions, with results shown in Figure \ref{log-RMSE of more RRP-RD}. We used Wilcoxon's signed-rank test with a significance level of $0.05$ and Bonferroni correction to compare RRP-RD performance across different ridge portions. The results show that performance with 2 ridge portions is significantly lower than with $3$ or $4$. Increasing from $3$ to $4$ ridge portions slightly worsens performance, particularly when $D_1$ is small. In some cases the difference is significant.}

\begin{figure}[hbt!]
\centering
\includegraphics[width=0.95\textwidth]{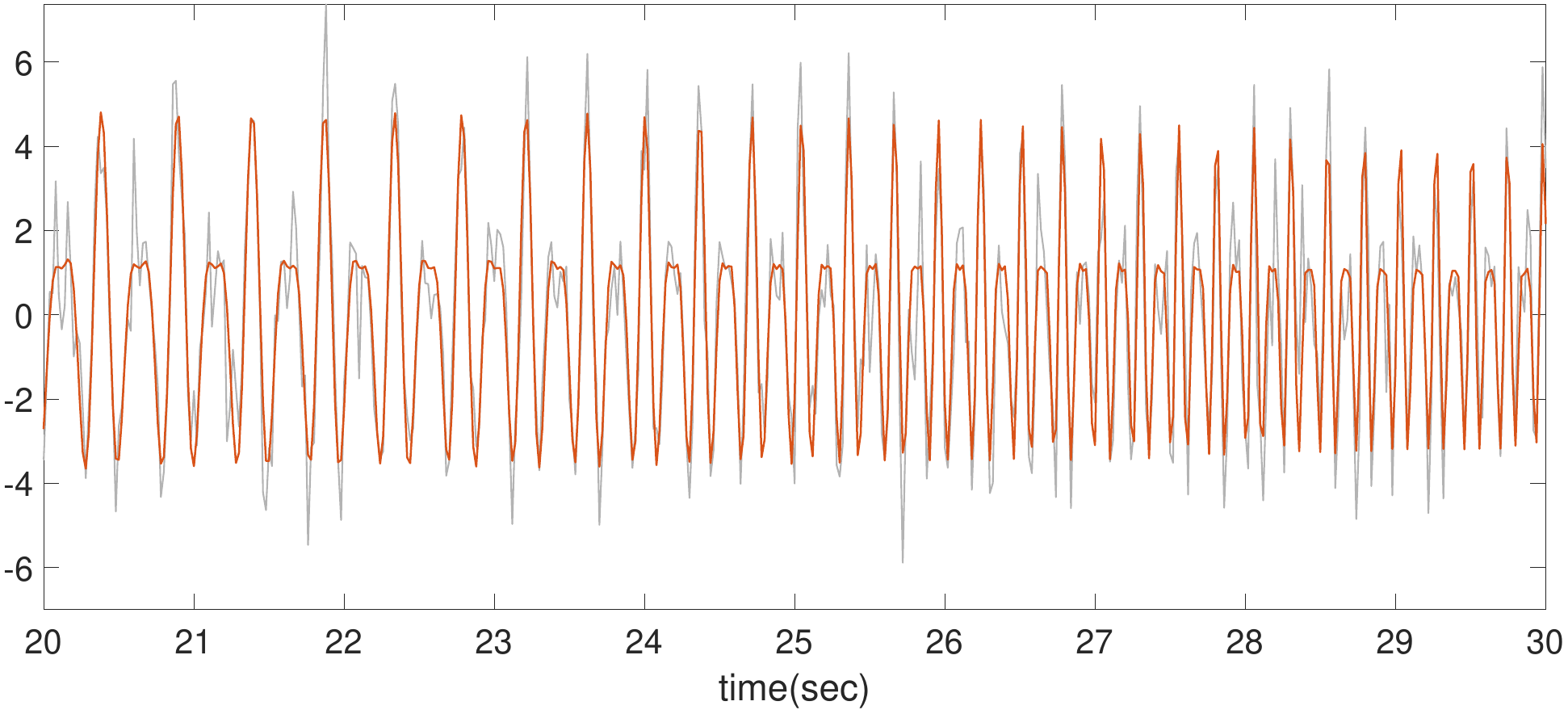}
\caption{{A realization of the simulated signal with a signal-to-noise ratio of 0 dB, as discussed in Section IV.A, is shown. The red curve is the clean signal, and the gray curve is the noisy signal. For better visualization, the segment from the 15th to the 35th second is displayed.}\label{figure an example of simu sig of 0 dB}}
\end{figure}

\begin{figure}[hbt!]
\centering
\includegraphics[width=0.95\textwidth]{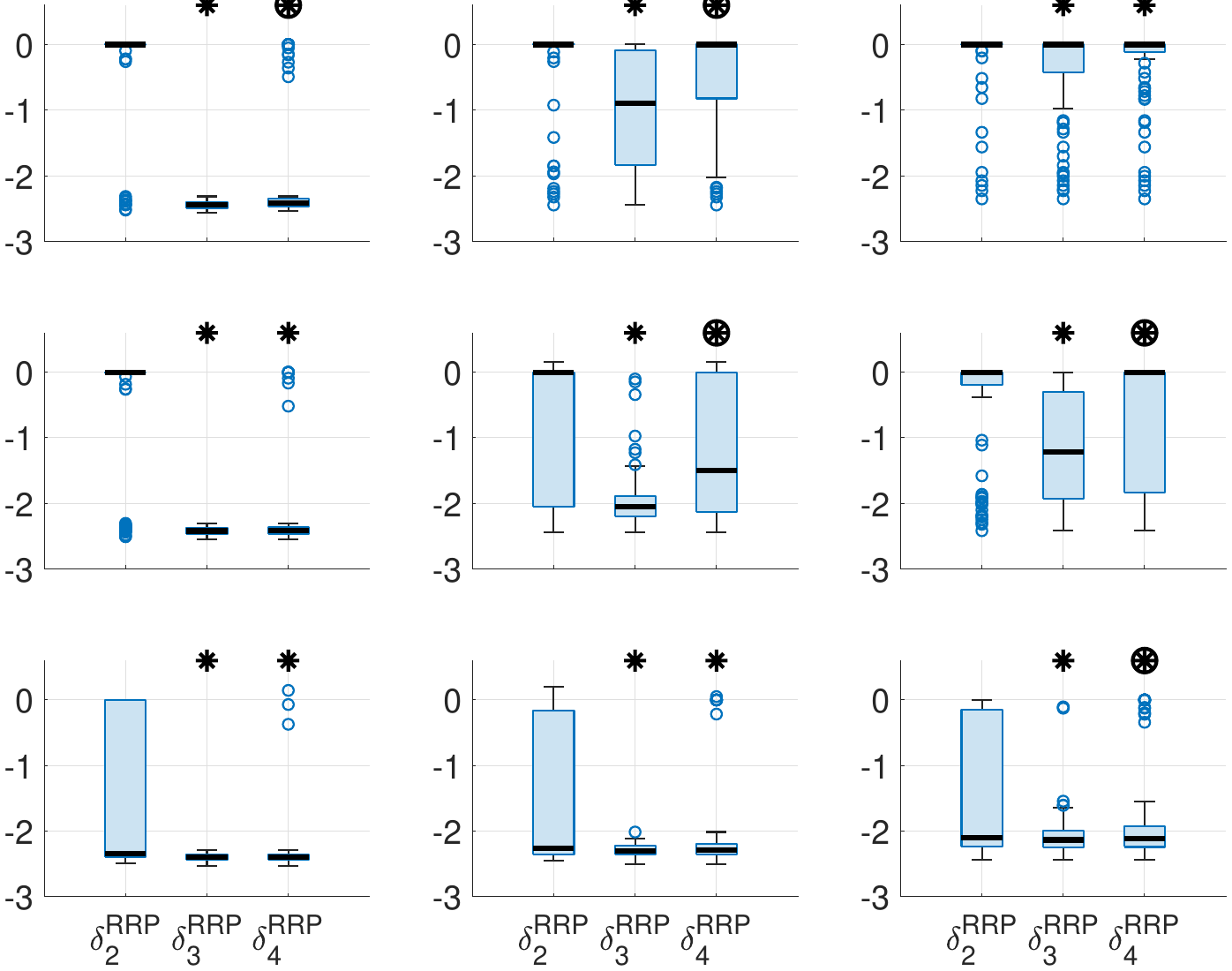}
\caption{{Additional log-RMSE distributions for the RRP ridge detection method with varying numbers of ridge portions. The box plot displays log-RMSE, with the median represented by a thick black bar.
The notation $\delta_p^{\textsf{RRP}}$ denotes the log-RMSE of the detected fundamental IF curve given by the RRP RD algorithm, with the number of ridge set to be $p$, where we have tested on $p=2$, $3$ and $4$.
Top to bottom rows: $D_1 = 0.1$, $D_1 = 0.2$ and $D_1 = 0.5$, respectively.
Left to right columns: noise-free, SNR = 5 dB and SNR = 0 dB, respectively.
The asterisk symbol indicates that the median of $\delta_p^{\textsf{RRP}}$, where $p=3,4$, is significantly lower than $\delta_2^{\textsf{RRP}}$. 
The circle symbol indicates that the median of $\delta_4^{\textsf{RRP}}$ is significantly higher than $\delta_3^{\textsf{RRP}}$.
}}
\label{log-RMSE of more RRP-RD}
\end{figure}

\section{Proof of Proposition \ref{prop: speed up algorithm}}\label{SIsection proof of prop: speed up algorithm}
\begin{proof} 
Let $\mathbf{h}\in [M]^{(K-1)\times N}$ so that its $k$-th row is $\mathbf{h}_k$. Then,
\[
\mathcal{L}_{c_1}^{(k+1)}(e_k^{\top}\mathbf{c}^*)\leq\mathcal{L}_{c_1}^{(k+1)}(e_k^{\top}\mathbf{h})=\max_{h:[N]\rightarrow[M]}\mathcal{L}_{c_1}^{(k+1)}(h)\,,
\]
where $e_k\in \mathbb{R}^{K-1}$ is a unit vector with $e_k(k)=1$.
Thus,
\begin{align}
\max_{\mathbf{c}:[N]\rightarrow[M]^{K-1}}\sum_{k=1}^{K-1}\mathcal{L}_{c_1}^{(k+1)}(e_k^{\top}\mathbf{c})
=&\,
\sum_{k=1}^{K-1}\mathcal{L}_{c_1}^{(k+1)}(e_k^{\top}\mathbf{c}^*)\nonumber\\
\leq&\,
\sum_{k=1}^{K-1}\mathcal{L}_{c_1}^{(k+1)}(e_k^{\top}\mathbf{h}).
\end{align}
Since $\mathbf{c}^*$ is a solution to (\ref{multiCurveExt:with Fundamental}), it follows that $\sum_{k=1}^{K-1}\mathcal{L}_{c_1}^{(k+1)}(e_k^{\top}\mathbf{c}^*)
=
\sum_{k=1}^{K-1}\mathcal{L}_{c_1}^{(k+1)}(e_k^{\top}\mathbf{h})$.
\end{proof}

\section{More details for walking prediction from IMU}

\subsection{Existing indices for walking detection}\label{section existing WSIs}

For a fair comparison, we consider several existing walking detection indices. 
In \cite{urbanek_prediction_2018}, the proposed method is to utilize the comb filters to detect the SHW state.
The idea is to use a set of ``comb teeth functions'' on the spectrogram to capture the most possible fundamental step-to-step frequency that has the maximum energy allocation on its multiples.
A comb teeth function is determined by its fundamental frequency $s\in D_f$, where $D_f$ is a set of candidate fundamental frequencies.
With the help of a comb filter,
they define a function $Y(t,s)=\max_{k=x,y,z}\{Y_k(t,s)\}$, $Y_k(t,s)$, where $s$ is the possible fundamental frequency $s\in D_f$ and $t\in \mathbb{R}$ is time, which describes the strength of the periodic content of the accelerometry signal.
Finally, the threshold $\delta$ that decides whether there is a SHW or not is determined by the density function of $\max_{s\in D_f}\{Y(t,s)\}$ for all SHW and non-SHW periods for all subjects.
At the end, the \textit{SHW index} is given by
\[
\hat{y}_\delta(t)=
\left\{
\begin{array}{l}
1,\;\text{if }\max_{s\in D_f}\{Y(t,s)\}>\delta \\
0,\;\text{otherwise,}
\end{array}
\right.
\]
where $\hat{y}_\delta(t)=1$ means the prediction state is SHW and $\hat{y}_\delta(t)=0$ means non-SHW. 
For each given accelerometry signal, the STFT is computed using the Hanning window of length 10 seconds, the number of the harmonics is set to be 6,
and the range of the fundamental frequency $D_f$ was set to be 1.2 Hz to 4.0 Hz. The kernel density estimation is applied with the normal kernel function to give the estimated densities. We implement the \textit{SHW index} \cite{urbanek_prediction_2018} following all parameters suggested in \cite{urbanek_prediction_2018} by choosing the number of harmonics $n_m=6$, Hanning window with window length $\tau=10$ seconds for the STFT, and $71$ candidates for the step-to-step frequency $D_f=\{1.2\text{Hz},1.24\text{Hz},\cdots,4\text{Hz}\}$ to generate the statistics $\max_{s\in D_f}\{Y(t,s)\}$.

Another common idea to detect walking activity is using the entropy ratio. This idea is similar to that used in the parameter selection in Section \ref{section parameter selection}.
After extracting the IF curves $\mathbf{c}$ from the SST $\mathbf{S}$, we construct a masked TFR $\mathbf{S}_1$ following \eqref{peeling scheme formula}, and compute the sequence $q(\ell):=R_\alpha\left(\left|\mathbf{S}_1(\ell\Delta_t,\cdot)\right|\right)$. Similarly, we define an entropy sequence $p(\ell):=R_\alpha\left(\left|\mathbf{S}(\ell\Delta_t,\cdot)\right|\right)$ associated with $\mathbf{S}$. The pointwise ratio of these two sequences tells whether the harmonics exist at certain time segment or not. Hence, the \textit{Entropy Ratio index} is defined as
\begin{equation}
Q_{\alpha,\texttt{SST}}(\ell\Delta_t)
:=
\textsf{med}\left[\frac{p(\ell)}{q(\ell)},10\right],\;\ell = 1,\cdots,N.
    \label{entropy-ratio-index}
\end{equation}
The notation $\mathsf{med}[\cdot,10]$ is the median filter with a $10$ seconds bandwidth.
Intuitively, the index $Q_{\alpha,\texttt{SST}}(\ell\Delta_t)$ should be small if $\ell\Delta_t$ is during a walking period, since the frequency concentration measurement $p(\ell)$ of that time on the SST should be small, and the one for the curves-removed TFR, $q(\ell)$, should be relatively large. 
On the contrary, if the state is not in a walking period, then $Q_{\alpha,\texttt{SST}}$ is expected to be close to 1. To implement the \textit{Entropy Ratio} index, with the IF curve $\mathbf{c}$, we construct $\mathbf{S}_1$ with a bandwidth of $0.04$Hz and take $\alpha=2.4$.

The next one is based on a bandpass filter to extract the possible walking pattern via the Hilbert transform. The ratio of the energy between 0.5 and 3 Hz and the energy between 0.3 and 8 Hz is called the Hilbert-WSI. This index comes from capturing the energy associated with the fundamental frequency, and the spectral range is chosen to cover the common momentary speed of stride, which is suitable for our data. Another index used in evaluating the freezing of gait (FOG) in patients with Parkinson's disease is defined as the ratio of the energy between 0.5 and 3 Hz and the energy between 3 and 8 Hz \cite{moore2008ambulatory}. The index is called iFOG in \cite{moore2008ambulatory}, but for the purpose of unifying the notation, we call it FOG-WSI. Note that while FOG-WSI was not designed for the walking detection, its design can potentially be used for walking detection, and we considered it in this work.

\subsubsection{Justification of SST-WSI}\label{section justification of SST-WSI}
It is important to note that SST-WSI's design accounts for the expansion \eqref{Expansion f Fourier series}, and the influence of ${\textsf S}_{\texttt{SST}}(\ell\Delta_t)$ characterizes the walking strength at time $\ell\Delta_t$. Indeed, when $\ell\Delta_t\in I_j$ and $\frac{bf_s}{2M}=\epsilon^{1/3}$, where $\epsilon>0$ is from the slowly varying assumption for $a_{q,k}$ and $\phi'_{q,k}$, by \cite{DaLuWu2011,Chen_Cheng_Wu:2014} we have for $k= 1,\ldots,K$, 
\begin{align}
\frac{f_s}{2M} \sum_{q\in B^{(k)}_\ell}\mathbf{S}(\ell\Delta_t,\,q)
\approx a_{q,k}(\ell\Delta_t) e^{2\pi i\phi_{q,k}(\ell\Delta_t)}
\end{align}
is the reconstruction of the $k$-th harmonic when $\ell\Delta_t\in I_q$. Thus, the SST-WSI evaluates directly the walking strength since 
\begin{align*}
{\textsf S}_{\texttt{SST}}(\ell\Delta_t)&\,\approx\sum_{k=1}^K \left|a_{q,k}(\ell\Delta_t)e^{i2\pi \phi_{q,k}(\ell\Delta_t)}\right|^2\approx \sum_{k=1}^K a_{q,k}(\ell \Delta_t)^2\,,
\end{align*}
which is the strength of the walking activity at time $\ell\Delta_t$. 

\subsection{More results}\label{more IMU location comparison results}

The comparison of various walking detection indices when the accelerometer signal is recorded from the left ankle (right ankle and hip, respectively) is shown in Figure \ref{WSI-boxchart-la} (Figures \ref{WSI-boxchart-ra} and \ref{WSI-boxchart-hip}, respectively). Note that in the HIP signal experiment, all the validation results of the Hilbert-WSI group has zero True-Positive rate, which leads to $0$ F1 (marked by the red frame).

\begin{figure}[hbt!]
\centering
\includegraphics[width=0.95\textwidth]{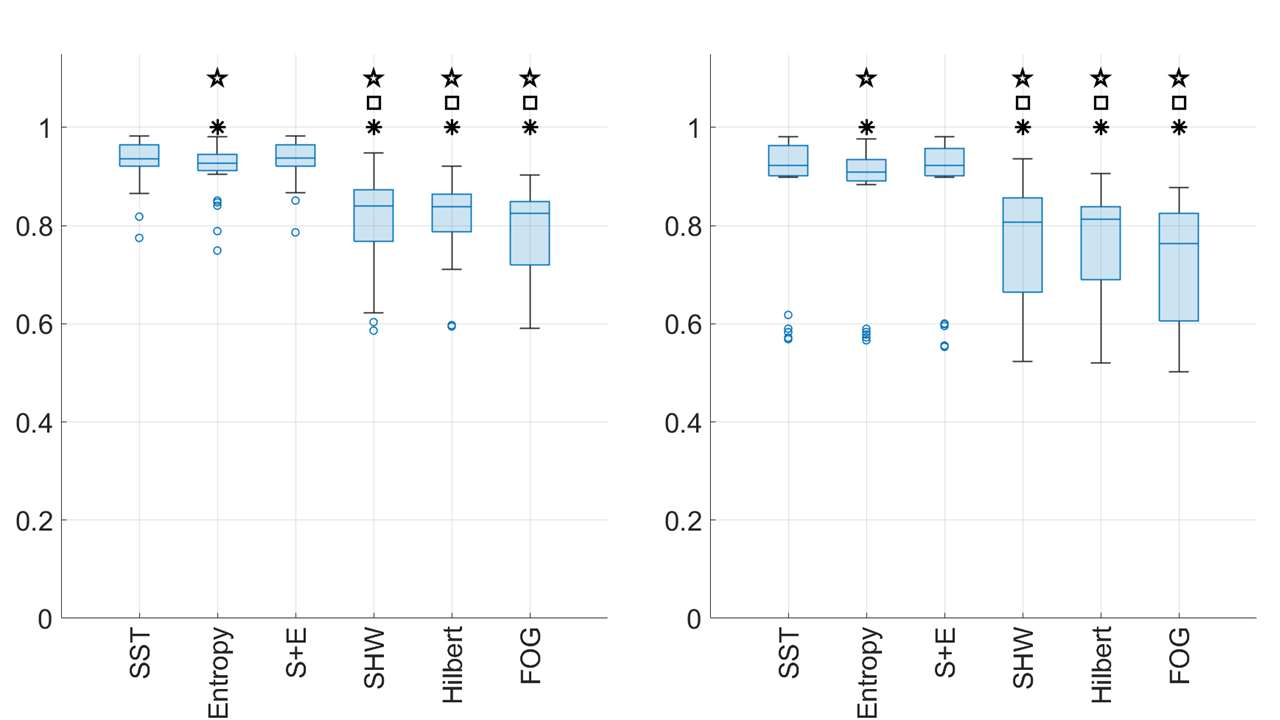}
\caption{The performance distributions of the LEFT-ANKLE signal.
\textbf{Left:} The accuracy distributions.
\textbf{Right:} The F1-score distributions.} 
\label{WSI-boxchart-la}
\end{figure}

\begin{figure}[hbt!]
\centering
\includegraphics[width=0.95\textwidth]{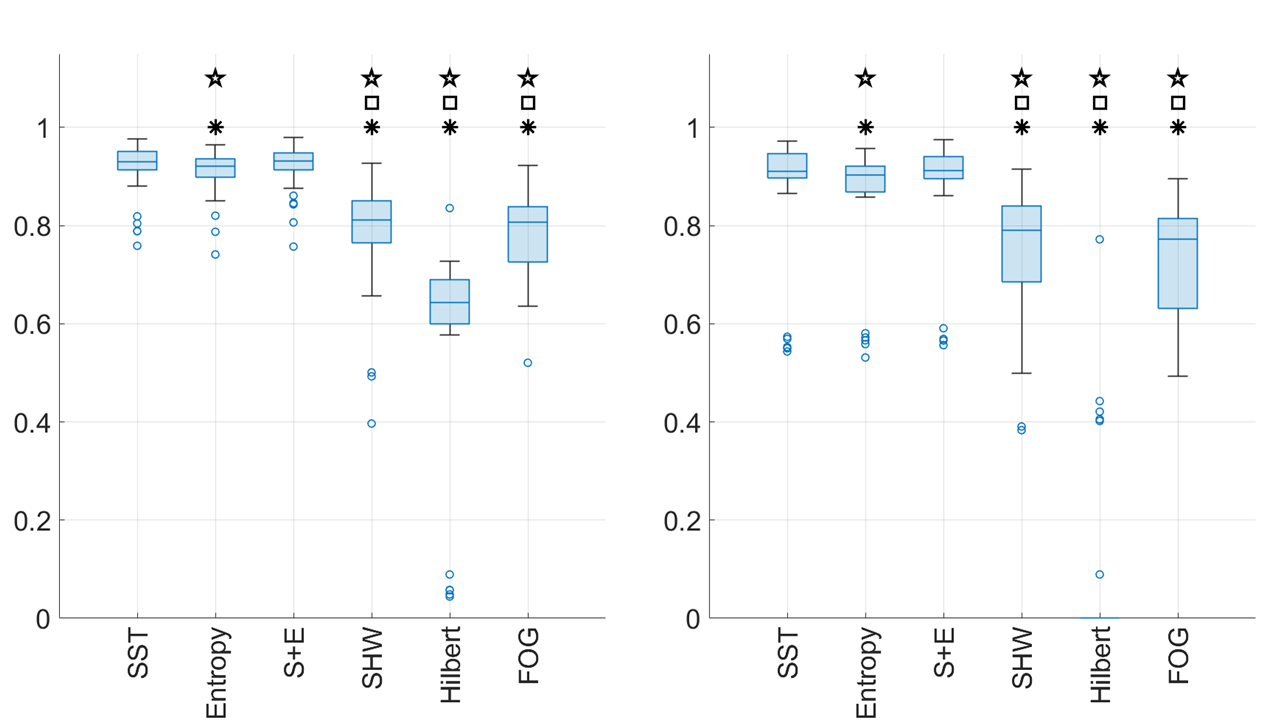}
\caption{The performance distributions of the RIGHT-ANKLE signal.
\textbf{Left:} The accuracy distributions.
\textbf{Right:} The F1-score distributions.}
\label{WSI-boxchart-ra}
\end{figure}

\begin{figure}[hbt!]
\centering
\includegraphics[width=0.95\textwidth]{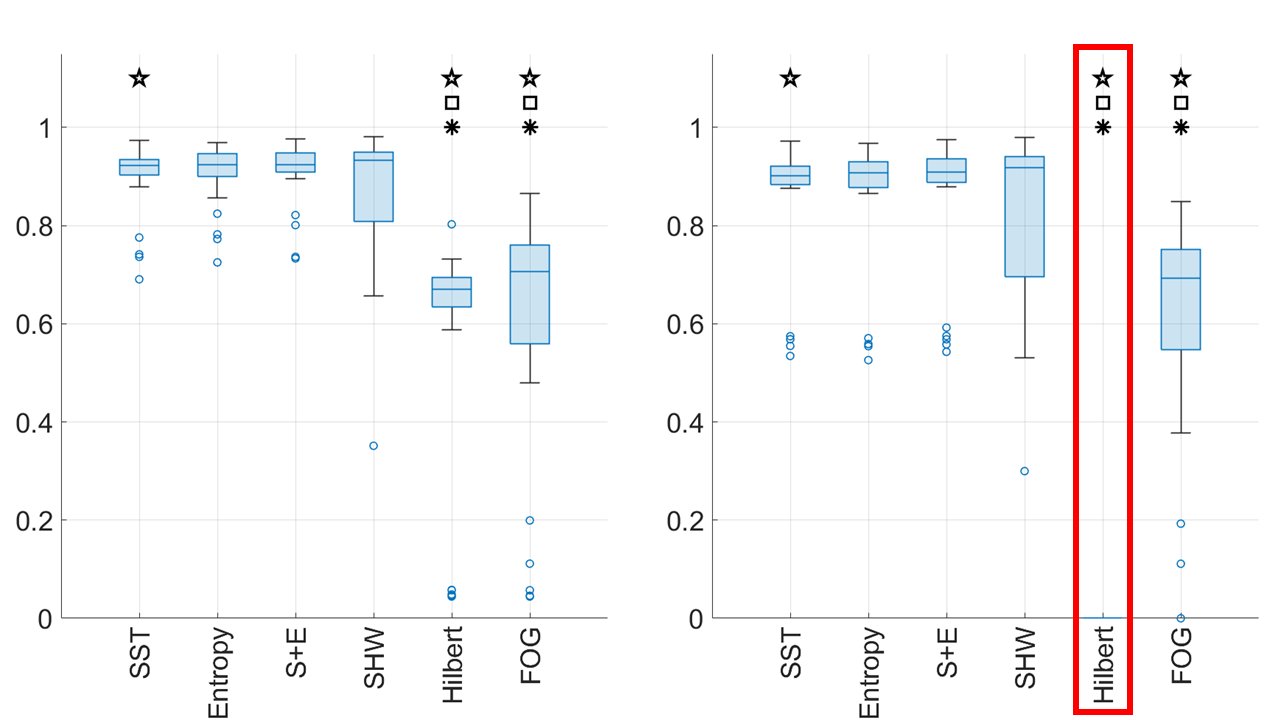}
\caption{The performance distributions of the HIP signal.
\textbf{Left:} The accuracy distributions.
\textbf{Right:} The F1-score distributions.}
\label{WSI-boxchart-hip}
\end{figure}

\end{document}